\renewcommand{\arraystretch}{1.2} 
\newcommand{\ra}[1]{\renewcommand{\arraystretch}{#1}}
\newtheorem{theorem}{Theorem}
\newtheorem{definition}[theorem]{Definition}
\newtheorem{proposition}[theorem]{Proposition}
\newtheorem{remark}[theorem]{Remark}
\newtheorem{corollary}[theorem]{Corollary}
\newcommand\bN{\mathbf N}
\newcommand\bW{\mathbf W}
\newcommand\bX{\mathbf X}
\newcommand\br{\mathbf r}
\newcommand\bu{\mathbf u}
\newcommand\bw{\mathbf w}
\newcommand\bz{\mathbf z}
\def\balpha{\boldsymbol{\alpha}}
\newcommand\cC{\mathcal C}
\newcommand\cD{\mathcal D}
\newcommand\cF{\mathcal F}
\newcommand\cL{\mathcal L}
\newcommand\cM{\mathcal M}
\newcommand\cP{\mathcal P}
\newcommand\EE{\mathbb E}
\newcommand\FF{\mathbb F}
\newcommand\JJ{\mathbb J}
\newcommand\PP{\mathbb P}
\newcommand\RR{\mathbb R}
\newcommand\TT{\mathbb T}
\newcommand\ZZ{\mathbb Z}
\renewcommand\l@paragraph[2]{}
\renewcommand\l@subparagraph[2]{}
\title[Machine Learning for Mean Field Ergodic Control]{Convergence Analysis of Machine Learning Algorithms for the Numerical Solution of Mean Field Control and Games: I the Ergodic Case.
}
\author{Ren\'e Carmona \& Mathieu Lauri\`ere}
\address{Program in Applied and Computational Mathematics \& ORFE}
\begin{document}

\maketitle

\begin{abstract}

We propose two algorithms for the solution of the optimal control of ergodic McKean-Vlasov dynamics. Both algorithms are based on approximations of the theoretical solutions by neural networks, the latter being characterized by their architecture and a set of  parameters. This allows the use of modern machine learning tools, and efficient implementations of stochastic gradient descent.

The first algorithm is based on the idiosyncrasies of  the ergodic optimal control problem.
We provide a mathematical proof of the convergence of the approximation scheme, and we analyze rigorously the approximation by controlling the different sources of error. The second method is an adaptation of the deep Galerkin method to the system of partial differential equations issued from the optimality condition. 

We demonstrate the efficiency of these algorithms on several numerical examples, some of them being chosen to show that our algorithms succeed where existing ones failed. We also argue that both methods can easily be applied to problems in dimensions larger than what can be found in the existing literature. 
Finally, we illustrate the fact that, although the first algorithm is specifically designed for mean field control problems, the second one is more general and can also be applied to the partial differential equation systems arising in the theory of mean field games.
\end{abstract}

\textbf{Key words.} Ergodic Mean Field Control, Ergodic Mean Field Game, Numerical Solution, Machine Learning, Rate of Convergence

\textbf{AMS subject classification.} 65M12, 65M99, 93E20, 93E25

\section{Introduction}
\label{se:introduction}

The purpose of this paper is to develop numerical schemes for the solution of Mean Field Games (MFGs) and Mean Field Control (MFC) problems. 
The mathematical theory of these problems has attracted a lot of attention in the last decade (see e.g.~\cite{MR2295621,Cardaliaguet-2013-notes,MR3134900,CarmonaDelarue_book_I,CarmonaDelarue_book_II}), and from the numerical standpoint several methods have been proposed, see e.g.~\cite{MR2679575,MR2888257,MR3148086,BricenoAriasetalCEMRACS2017,MR3914553} and~\cite{MR3501391,MR3575615,MR3619691,balata2019class} for finite time horizon MFG and MFC respectively, and~\cite{MR3698446,MR3882530,MR3772008} for stationary MFG. 
However, despite recent progress, the numerical analysis of these problems is still lagging behind because of their complexity, in particular when the dimension is high. 
Here, we choose a periodic model to demonstrate that powerful tools developed for machine learning applications can be harnessed to produce efficient numerical schemes performing better than existing technology in the solution of these problems. We derive systematically the mathematical formulation of the optimization problem amenable to the numerical analysis, and we prove rigorously the convergence of a numerical scheme based on feed-forward neural network architectures. 
Our first method is designed for the optimal control of McKean-Vlasov dynamics, which is the primary purpose of the present work. Besides the intrinsic motivations for this type of problems, a large class of MFGs has a variational structure and can be recast in this form, see e.g.~\cite{MR2295621,MR3644590}. Furthermore, the second method we present tackles the PDE system characterizing optimality conditions satisfied by the solution, and it can be directly adapted to solve the PDE system arising in MFGs as we shall explain.

In the subsequent analysis of finite horizon mean field control problems, see \cite{CarmonaLauriere_DL}, the thrust of the study will be the numerical solution of Forward-Backward Stochastic Differential Equations (FBSDEs) of the McKean-Vlasov type, which generalizes to the mean-field setting techniques introduced in~\cite{MR3736669} for standard BSDEs.  Indeed, the well established probabilistic approach to MFGs and MFC posits that the search for Nash equilibria for MFGs, as well as the search for optimal controls for MFC problems, can be reduced to the solutions of FBSDEs of this type. See for example the books \cite{CarmonaDelarue_book_I,CarmonaDelarue_book_II} for a comprehensive expos\'e of this approach. 
Here, we concentrate on the ergodic problem for which we can provide a direct analytic approach. 
Our mathematical analysis of the model leads to an infinite dimensional optimization problem for which we can
identify and implement numerical schemes capable of providing stable numerical solutions. We prove the theoretical convergence of these approximation schemes and we demonstrate the efficiency of their implementations by comparing their outputs to solutions of benchmark models obtained either by analytical formulas or by a deterministic method for Partial Differential Equations (PDEs). Note that when a game is potential, the search for Nash equilibria can be reformulated as an optimal control problem for a central planner, and because of the McKean-Vlasov nature of this control problem, it can be reformulated as a deterministic control problem for which the state is a probability distribution. This control problem can be tackled by dynamic programing, leading to a Hamilton-Jacobi equation.  This is the approach taken in \cite{Chow_et_al}. See also \cite{Ruthotto_et_al} for a recent contribution.

The reasons for our choice of the ergodic case as a prime testbed for our numerical schemes are twofold. First, the absence of the time variable lowers the complexity of the problem and gives us the opportunity to avoid the use of FBSDEs and to rely on strong approximation results from the theory of feed-forward neural networks which we use in this paper. Second, the objective function can be expressed as an integral over the state space with respect to the invariant measure of the controlled system, leading to a much simpler deterministic optimization problem. Indeed, after proving that the state dynamics at the optimum are given by a gradient diffusion, we postulate the form of the invariant measure and optimize accordingly. Last, the choice of the ergodic case for the first model which we consider is motivated by a forthcoming work on reinforcement learning~\cite{CarmonaLauriereTan-2019-LQMFRL}.

As a final remark we emphasize that, while all the results and numerical implementations concern Markovian controls and equilibria, in most cases, both theoretical and numerical results still hold for controls and strategies being feedback functions of the history of the path of the state of the system. The theoretical extensions are straightforward, and the numerical implementations rely on the so-called \emph{recurrent neural networks} instead of the standard feed-forward networks. We refrain from discussing these extensions to avoid the extra technicalities, especially in the notations and the statements of the results.

The paper is structured as follows. In Section~\ref{se:ergodic}, we present the framework of ergodic mean field control, derive formally necessary optimality conditions and introduce a setting amenable to numerical computations. To wit, we provide the mathematics that lead to natural introductions of the algorithms and their analyses. The first algorithm is presented in Section~\ref{sub:periodic_numerics}. We study rigorously its convergence and its accuracy by proving bounds on the approximation and estimation errors (see Theorems~\ref{thm:ergo-approx-NN} and~\ref{thm:ergo-estim-NN} respectively). Section~\ref{sec:DGM} is dedicated to the second of our algorithms. It is based on a variation over the deep Galerkin solver for the PDE system stemming from the aforementioned optimality conditions. Computational results are presented in Section~\ref{sec:num-res}. They demonstrate the applicability and the performance of our algorithms.
Several test cases are considered. They were chosen for comparison purposes because they can be solved either \emph{explicitly} via analytical formulas, or numerically by classical PDE system solvers like in the case of ergodic mean field games.

\vskip 2pt\noindent
\emph{Acknowledgements:  Both authors were partially supported by ARO grant AWD1005491 and NSF award
AWD1005433. Also, we would like to thank two anonymous referees for a rigorous review of the original version of the paper. Their insightful comments helped us improve significantly the quality of the paper.}

\section{Ergodic Mean Field Control}
\label{se:ergodic}
Since we are not aiming at the greatest possible generality, for the sake of definiteness, we work with a standard infinite horizon drift-controlled It\^o process:
$$
dX_t=b(X_t,\cL(X_t),\alpha_t)dt + dW_t
$$
where $\bW=(W_t)_{t\ge 0}$ is a $d$-dimensional Wiener process and where we use the notation $\cL(X_t)$ for the law of the random variable $X_t$. For notational convenience only, we take the volatility coefficient to be $1$. We limit ourselves to stationary controls $\balpha=(\alpha_t)_{t\ge 0}$ of the form $\alpha_t=\phi(X_t)$ given by deterministic measurable and time-independent feedback functions $\phi$ taking values in a closed convex subset $A$ of a Euclidean space $\RR^k$.  We shall assume that the function $b$ is measurable and bounded on $\RR^d\times \cP_2(\RR^d)\times A$. The space $\cP_2(\RR^d)$ is the space of probability measures on $\RR^d$ having a finite second moment. We assume that it is equipped with the $2$-Wasserstein distance and the corresponding Borel $\sigma$-field. Since we consider controls in feedback form, the above controlled state evolution is in fact a stochastic differential equation, but according to Veretennikov's classical result, this equation:
\begin{equation}
\label{fo:dynamics}
dX_t=b(X_t,\cL(X_t),\phi(X_t))dt +  dW_t
\end{equation}
has a unique strong solution. See for example \cite{Veretennikov, Veretennikov2} and \cite[Theorem 2]{MishuraVeretennikov}. We say that the feedback control function $\phi$ is admissible if it is continuous and if the solution $\bX=(X_t)_{t\ge 0}$ is ergodic in the sense that it has a unique invariant measure which we denote by $\nu^\phi$, and that $\cL(X_t)$ converges toward $\nu^\phi$ in $\cP_2(\RR^d)$. The ergodic theory of McKean-Vlasov stochastic differential equations has recently received a lot of attention. See for example \cite{Butkovsky,Veretennikov2,Yarykin} for some specific ergodicity sufficient conditions.

\subsection{Ergodic Mean Field Costs}
The goal of the ergodic control problem is to minimize the cost:
$$
	J(\phi)=\limsup_{T\to\infty}\frac1T\EE\Bigl[\int_0^T f\bigl(X_t,\cL(X_t),\phi(X_t)\bigr)dt\Bigr].
$$
For the sake of definiteness, we assume that the running cost function $f:\RR^d\times\cP_2(\RR^d)\times A \ni (x, \mu, \alpha) \mapsto f(x, \mu, \alpha) \in \RR$ is continuous and bounded.
The cost can be rewritten in the form:
\begin{equation}
\label{fo:ergodic_cost}
J(\phi)=\limsup_{T\to\infty}\frac1T\int_0^T \langle f\bigl(\cdot,\mu_t,\phi(\cdot)\bigr),\mu_t\rangle  dt,\qquad \mu_t=\cL(X_t),
\end{equation}
if we use the standard notation $\langle\varphi, \nu\rangle $ for the integral of the function $\varphi$ with respect to the measure $\nu$.
When $\phi$ is admissible, the invariant measure $\nu^\phi$ appears as the limit as $t\to\infty$ of $\mu_t$, and if
$f$ is  uniformly continuous in the measure argument, uniformly with respect to the other two arguments, then we can take the limit $T\to\infty$ in the formula giving the ergodic cost \eqref{fo:ergodic_cost} and obtain:
\begin{equation}
\label{fo:ergodic_cost_2}
J(\phi)= \langle f\bigl(\cdot,\nu^\phi,\phi(\cdot)\bigr),\nu^\phi\rangle =F(\nu^\phi,\phi)
\end{equation}
if, for each probability measure $\mu\in\cP_2(\RR^d)$,  and each time-independent $A$-valued feedback function $\phi$ on $\RR^d$, we  use the notation: 
\begin{equation}
\label{fo:F}
F(\mu,\phi)=\int f\bigl(x,\mu,\phi(x)\bigr)\;\mu(dx).
\end{equation}
The controlled process solving \eqref{fo:dynamics} being ergodic, we can
characterize the unique invariant probability measure as the solutions of the non-linear Poisson equation:
\begin{equation}
\label{fo:Poisson}
\frac12 \Delta \nu -\mathrm{div} \bigl( b(\cdot,\nu,\phi(\cdot))\nu \bigr)=0.
\end{equation}
So the goal of our mean field control problem is to minimize, over the admissible  feedback functions $\phi$, the quantity:
\begin{equation}
\label{fo:Jofphi}
J(\phi)= F(\nu^\phi,\phi),
\end{equation}
with $F$ defined above in \eqref{fo:F}, and $\nu^\phi$ solving the Poisson equation \eqref{fo:Poisson}.

\subsection{The Adjoint Equation and Optimality Conditions}
In order to characterize the minima of the functional $J$, we compute its Gateaux derivative. 
To do so, we assume that the function $b:\RR^d\times\cP_2(\RR^d)\times A\ni (x,\mu,\alpha)\mapsto b(x,\mu,\alpha)\in\RR^d$ is continuously differentiable in the variables $(x,\alpha)\in\RR^d\times A\subset \RR^d\times\RR^k$ and has a continuous functional (i.e. linear) differential in the variable $\mu$ when $\mu$ is viewed as an element of the (linear) space $\cM(\RR^d)$ of finite signed measures on $\RR^d$. We denote this derivative by $D_\mu b$. We stress that it is different from the Wasserstein derivative or L-derivative in the sense of Lions. 

Let $\phi$ be fixed and let $\psi$ provide a small perturbation of $\phi$. We first compute, at least formally, the derivative of the probability $\nu^\phi$ in the direction $\psi$, namely:
\begin{equation}
\label{fo:delta_nu}
\delta\nu^{\phi,\psi}=\lim_{\epsilon\searrow 0}\frac1\epsilon [\nu^{\phi+\epsilon\psi}-\nu^\phi],
\end{equation}
when we view probability measures as elements of the space $\cM(\RR^d)$ of finite (signed) measures on $\RR^d$.
Notice that since $\int\nu^\phi=\int \nu^{\phi+\epsilon\psi}=1$, we must have $\int\delta\nu^{\phi,\psi}=0$.  The Poisson equation \eqref{fo:Poisson} implies:
\begin{equation*}
\begin{split}
0&=\frac12\Delta[\nu^{\phi+\epsilon\psi}-\nu^\phi] 
-\mathrm{div}\bigl[\, b(\cdot,\nu^{\phi+\epsilon\psi},\phi(\cdot)+\epsilon\psi(\cdot))\, [\nu^{\phi+\epsilon\psi}-\nu^\phi]\bigr] \\
&\hskip 45pt
-\mathrm{div}\bigl[\, [b(\cdot,\nu^{\phi+\epsilon\psi},\phi(\cdot)+\epsilon\psi(\cdot))-b(\cdot,\nu^{\phi+\epsilon\psi},\phi(\cdot))]\,\nu^\phi]\bigr] \\
&\hskip 75pt
-\mathrm{div}\bigl[\, [b(\cdot,\nu^{\phi+\epsilon\psi},\phi(\cdot))-b(\cdot,\nu^{\phi}\phi(\cdot))]\,\nu^\phi]\bigr]
\end{split}
\end{equation*}
and from this equality, we find that if the directional derivative \eqref{fo:delta_nu} exists, it must solve the following Partial Differential Equation (PDE):
\begin{equation}
\label{fo:PDE}
0=\frac12\Delta(\delta\nu^{\phi,\psi})-\mathrm{div}\bigl[\, b(\cdot,\nu^\phi,\phi(\cdot))\, ( \delta\nu^{\phi,\psi})\bigr] 
-\mathrm{div}\bigl[\partial_\alpha b(\cdot,\nu^\phi,\phi(\cdot))\psi(\cdot)\nu^\phi\bigr]
-\mathrm{div}\bigl[\langle D_\mu b(\cdot,\nu^\phi,\phi(\cdot))\psi(\cdot),\delta\nu^{\phi,\psi}\rangle \nu^\phi\bigr],
\end{equation}
just by dividing both sides by $\epsilon$ and taking the limit $\epsilon\searrow 0$. Note that the quantity 
$$
\langle \partial_\mu b(\cdot,\nu^\phi,\phi(\cdot))\psi(\cdot),\delta\nu^{\phi,\psi}\rangle 
$$ 
is merely the integral of the function $\partial_\mu b(\cdot,\nu^\phi,\phi(\cdot))\psi(\cdot)$ with respect to the measure $\delta\nu^{\phi,\psi}$.

\vskip 6pt
Before we turn to the objective function $J$, we introduce the notion of adjoint equation and adjoint function.

\begin{definition}
\label{de:adjoint}
For each admissible feedback function $\phi$ (and associated solution $\nu^\phi$ of the Poisson equation), we say that the couple $(p,\lambda)$ where $p$ is a function on the state space and $\lambda$ is a constant, is a couple of adjoint variables if they satisfy the following 
linear elliptic PDE:
\begin{equation}
\label{fo:adjoint}
\lambda + \frac12\Delta p(x) + b(x,\nu^\phi,\phi(x))\cdot\nabla p(x)=f(x,\nu^\phi,\phi(x)) + \int D_\mu f(\xi,\nu^\phi,\phi(\xi))(x)\, \nu^\phi(d\xi),
\end{equation}
which we call the adjoint equation. Any solution will be denoted by $(p^\phi,\lambda^\phi)$.
\end{definition}

Recall that here, the derivative $D_\mu f$ is the standard linear functional derivative (of smooth functions on the vector space $\cM(\RR^d)$), which is a function of $x$.%

\begin{proposition}
\label{pr:Gateaux}
The directional derivative of the cost function $J$ defined in \eqref{fo:Jofphi} is given by the formula:
\begin{equation}
\label{fo:dJ_H}
\frac{d}{d\epsilon}J(\phi+\epsilon\psi)\Big|_{\epsilon=0}
=\delta^{\phi,\psi}H(\nu^\phi,p^\phi,\phi),
\end{equation}
where $\delta^{\phi,\psi}$ denotes the functional derivative with respect to $\phi$ in the direction $\psi$, and the Hamiltonian $H$ is defined by:
\begin{equation}
\label{fo:Hamiltonian}
H(\mu,p,\phi)
= F(\mu,\phi)+\int  p(x) \mathrm{div}[b(\cdot,\mu,\phi(\cdot))\mu](x) dx
\end{equation}
\end{proposition}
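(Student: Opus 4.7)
The plan is to apply the chain rule to $J(\phi+\epsilon\psi) = F(\nu^{\phi+\epsilon\psi}, \phi+\epsilon\psi)$, then to transport the ``indirect'' piece (coming from the variation of the invariant measure) through the adjoint equation \eqref{fo:adjoint} and the linearized Poisson equation \eqref{fo:PDE} until what survives matches the $\phi$-variation of $H$. First, since $F(\mu,\phi) = \int f(x,\mu,\phi(x))\,\mu(dx)$, a product-rule computation in the measure argument yields the linear functional derivative
\[
D_\mu F(\nu^\phi,\phi)(x) = f(x,\nu^\phi,\phi(x)) + \int D_\mu f(\xi,\nu^\phi,\phi(\xi))(x)\,\nu^\phi(d\xi),
\]
so that the chain rule at $\epsilon=0$ splits $\frac{d}{d\epsilon}J(\phi+\epsilon\psi)|_{\epsilon=0}$ into the direct contribution $\delta^{\phi,\psi} F(\nu^\phi,\phi) = \int \partial_\alpha f(x,\nu^\phi,\phi(x))\cdot\psi(x)\,\nu^\phi(dx)$ and the indirect contribution $\int D_\mu F(\nu^\phi,\phi)(x)\,\delta\nu^{\phi,\psi}(dx)$, with $\delta\nu^{\phi,\psi}$ defined in \eqref{fo:delta_nu}.

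Second, by the very definition \eqref{fo:adjoint} of the adjoint pair, $D_\mu F(\nu^\phi,\phi)(x) = \lambda^\phi + Lp^\phi(x)$ where $L := \tfrac12\Delta + b(\cdot,\nu^\phi,\phi(\cdot))\cdot\nabla$. The constant $\lambda^\phi$ integrates to zero against $\delta\nu^{\phi,\psi}$ because the perturbed measures appearing in \eqref{fo:delta_nu} are all probabilities, leaving only $\int Lp^\phi\,\delta\nu^{\phi,\psi}$. Dualizing on the periodic state space (so that no boundary terms arise), this equals $\int p^\phi\,L^*\delta\nu^{\phi,\psi}\,dx$, where $L^*\mu := \tfrac12\Delta\mu - \mathrm{div}[b(\cdot,\nu^\phi,\phi(\cdot))\,\mu]$. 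The linearized Poisson equation \eqref{fo:PDE} identifies $L^*\delta\nu^{\phi,\psi}$ with the sum of $\mathrm{div}[\partial_\alpha b(\cdot,\nu^\phi,\phi(\cdot))\,\psi\,\nu^\phi]$ and a cross-term in $D_\mu b$ and $\delta\nu^{\phi,\psi}$. Substituting and integrating the first divergence against $p^\phi$ produces exactly the $\phi$-variation (at fixed $\nu^\phi$ and $p^\phi$) of the drift-penalty term $\int p^\phi\,\mathrm{div}[b(\cdot,\nu^\phi,\phi(\cdot))\,\nu^\phi]\,dx$ in the Hamiltonian \eqref{fo:Hamiltonian}. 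Adding the direct piece then reconstitutes $\delta^{\phi,\psi} H(\nu^\phi,p^\phi,\phi)$, which is \eqref{fo:dJ_H}.

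Two difficulties are anticipated. The first is regularity: one must ensure that $\delta\nu^{\phi,\psi}$ and $p^\phi$ exist and are smooth enough to legitimize differentiation under the integral and integration by parts; on the periodic domain announced in the introduction, standard linear elliptic theory applied to \eqref{fo:PDE} and \eqref{fo:adjoint} supplies this under the paper's working smoothness assumptions on $b$ and $f$. The second, and more substantive, is the residual $D_\mu b$ cross-term produced by \eqref{fo:PDE}: the proof must arrange for this term to be absorbed, either through a Fubini exchange between $\nu^\phi(d\xi)$ and $\delta\nu^{\phi,\psi}(dy)$ that pairs it against part of $\int D_\mu F\,\delta\nu^{\phi,\psi}$, or through the convention implicit in the adjoint equation \eqref{fo:adjoint} (which as written carries no $D_\mu b$ contribution). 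Verifying this cancellation, together with the two integration-by-parts identities, is the core check of the proof; everything else is bookkeeping via the chain rule.
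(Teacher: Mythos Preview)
Your approach is essentially the paper's: chain rule on $F(\nu^{\phi+\epsilon\psi},\phi+\epsilon\psi)$, substitute the adjoint equation \eqref{fo:adjoint} into the indirect piece, drop $\lambda^\phi$ against $\int\delta\nu^{\phi,\psi}=0$, dualize $L$ to $L^*$, and use \eqref{fo:PDE} to convert $L^*\delta\nu^{\phi,\psi}$ into divergences against $\nu^\phi$. The paper does exactly this, though it writes the integration by parts term-by-term rather than through the operator notation $L,L^*$.

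On your ``second difficulty'': you are right, and in fact the paper's own proof does not resolve it either. Formula \eqref{fo:dJ} carries the residual
\[
\int p(x)\,\mathrm{div}\bigl[\langle D_\mu b(\cdot,\nu^\phi,\phi(\cdot)),\delta\nu^{\phi,\psi}\rangle\,\nu^\phi\bigr](x)\,dx
\]
explicitly, and then the paper simply asserts that \eqref{fo:dJ} and \eqref{fo:delta_Hamiltonian} together yield \eqref{fo:dJ_H}, which is only correct if this term vanishes. There is no Fubini trick that cancels it against something already present, and the adjoint equation \eqref{fo:adjoint} as stated has no $D_\mu b$ contribution to absorb it. The actual resolution in the paper is tacit: in the class of models treated from Section~2.3 onward, $b(x,\alpha)=b_0\alpha+\nabla\tilde b(x)$ is independent of $\mu$, so $D_\mu b\equiv 0$ and the term disappears. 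For a general $\mu$-dependent drift the proposition as stated is incomplete; one would need to augment the adjoint equation with a $D_\mu b$ term (mirroring the $D_\mu f$ term) to make the cancellation go through. So your instinct here is sharper than the paper's exposition.
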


\begin{proof}
Using the definitions \eqref{fo:Jofphi} and \eqref{fo:F} we get:
\begin{equation*}
\begin{split}
J(\phi+\epsilon\psi)-J(\phi)
&=\int f(x, \nu^{\phi+\epsilon\psi},\phi(x)+\epsilon\psi(x))\nu^{\phi+\epsilon\psi}(dx)-\int f(x, \nu^{\phi},\phi(x))\nu^{\phi}(dx)\\
&=\int\bigl[ f(x, \nu^{\phi+\epsilon\psi},\phi(x)+\epsilon\psi(x))- f(x, \nu^{\phi},\phi(x)+\epsilon\psi(x))\bigr]\nu^{\phi+\epsilon\psi}(dx)\\
&\hskip 45pt
+\int\bigl[ f(x, \nu^{\phi},\phi(x)+\epsilon\psi(x))- f(x, \nu^{\phi},\phi(x))\bigr]\nu^{\phi+\epsilon\psi}(dx)\\&\hskip 45pt
+\int f(x, \nu^{\phi},\phi(x))\bigl[ \nu^{\phi+\epsilon\psi}-\nu^{\phi}\bigr](dx),
\end{split}
\end{equation*}
so that, using Fubini's theorem we have:
\begin{equation*}
\begin{split}
\frac{d}{d\epsilon}J(\phi+\epsilon\psi)\Big|_{\epsilon=0}
&=\int\int D_\mu f(x, \nu^{\phi},\phi(x))(\xi)\;(\delta\nu^{\phi,\psi})(d\xi)\,\nu^\phi(dx)\\
&\hskip 45pt
+\int \partial_\alpha f(x, \nu^{\phi},\phi(x))\psi(x)\,\nu^{\phi}(dx)
+\int f(x, \nu^{\phi},\phi(x))\; (\delta \nu^{\phi,\psi})(dx)\\
&=\int\Bigl[ \int D_\mu f(y, \nu^{\phi},\phi(y))(x)\nu^\phi(dy)+f(x, \nu^{\phi},\phi(x))\Bigr]\; (\delta \nu^{\phi,\psi})(dx)\\
&\hskip 45pt
+\int \partial_\alpha f(x, \nu^{\phi},\phi(x))\psi(x)\,\nu^{\phi}(dx).
\end{split}
\end{equation*}
Now, using the adjoint equation \eqref{fo:adjoint} and the fact that the integral of $\delta\nu^{\phi,\psi}$ is $0$, we get:
\begin{equation*}
\begin{split}
\frac{d}{d\epsilon}J(\phi+\epsilon\psi)\Big|_{\epsilon=0}
&=\int\Bigl[\lambda+ \frac12\Delta p(x) + b(x,\nu^\phi,\phi(x))\cdot\nabla p(x)\Bigr]\; (\delta \nu^{\phi,\psi})(dx)\\
&\hskip 45pt
+\int \partial_\alpha f(x, \nu^{\phi},\phi(x))\psi(x)\,\nu^{\phi}(dx)\\
&=\int p(x)  \frac12\Delta (\delta \nu^{\phi,\psi})(x) dx+\int b(x,\nu^\phi,\phi(x))\cdot\nabla p(x)\; (\delta \nu^{\phi,\psi})(dx)\\
&\hskip 45pt
+\int \partial_\alpha f(x, \nu^{\phi},\phi(x))\psi(x)\,\nu^{\phi}(dx).
\end{split}
\end{equation*}
Finally, using \eqref{fo:PDE} we get:
\begin{equation}
\label{fo:dJ}
\begin{split}
	\frac{d}{d\epsilon}J(\phi+\epsilon\psi)\Big|_{\epsilon=0}
	&=\int  p(x) \Bigl(\mathrm{div}[ \partial_\alpha b(\cdot,\phi(\cdot))\psi(\cdot) \nu^{\phi}](x)
	+\mathrm{div}[ \langle D_\mu b(\cdot,\nu^\phi,\phi(\cdot))\psi(\cdot),\delta\nu^{\phi,\psi}\rangle  \nu^{\phi}](x)\Bigr) dx\\
	&\hskip 75pt
	+\int \partial_\alpha f(x, \nu^{\phi},\phi(x))\psi(x)\,\nu^{\phi}(dx).
\end{split}
\end{equation}
To complete the proof, we express this directional derivative in terms of the Hamiltonian function defined in \eqref{fo:Hamiltonian}.
The latter can be rewritten as:
$$
H(\mu,p,\phi)
= F(\mu,\phi) -\int \nabla p(x) b(x,\mu,\phi(x))\mu(dx),
$$
and its directional derivative is given by:
\begin{equation}
\label{fo:delta_Hamiltonian}
\begin{split}
	\delta^{\phi,\psi}H(\mu,p,\phi)
	&= \lim_{\epsilon\searrow 0}\frac1\epsilon[H(\mu,p,\phi+\epsilon\psi)-H(\mu,p,\phi)]\\
	&= \int \partial_\alpha f(x,\mu,\phi(x))\psi(x)\mu(dx) -\int \nabla p(x) \partial_\alpha b(x,\mu,\phi(x))\psi(x) \mu(dx)\\
	&= \int \Bigl[ \partial_\alpha f(x,\mu,\phi(x))- \nabla p(x) \partial_\alpha b(x,\mu,\phi(x))\Bigr]\psi(x) \mu(dx).
\end{split}
\end{equation}
Putting together \eqref{fo:dJ} and \eqref{fo:delta_Hamiltonian} we get the desired result.
\end{proof}

So, at least informally, solving the ergodic McKean-Vlasov control problem reduces to the solution of the system:
\begin{equation}
\label{fo:optimality}
\begin{cases}
&0=\frac12 \Delta \nu -\mathrm{div} \bigl( b(\cdot,\nu,\phi(\cdot))\nu \bigr)\\
&0=\lambda+\frac12\Delta p(x) + b(x,\nu,\phi(x))\cdot\nabla p(x)-\int D_\mu f(\xi,\nu,\phi(\xi))(x)\, \nu(d\xi) - f(x,\nu,\phi(x))\\
&0= \partial_\alpha f(x,\nu,\phi(x)) - \nabla p(x) \partial_\alpha b(x,\nu,\phi(x)).
\end{cases}
\end{equation}
Note that the third equation guarantees the criticality of the function $\alpha\mapsto  f(x,\nu,\alpha) - y \cdot b(x,\nu,\alpha)$, so if we define the minimized Hamiltonian $H^\star$ by:
\begin{equation}
\label{fo:H_star}
	H^\star(x,\mu,y) = \inf_{\alpha} \bigl(f(x,\mu,\alpha) - y \cdot b(x,\mu,\alpha) \bigr),
\end{equation}
the above system can be written as:
\begin{equation}
\label{fo:optimality-H}
\begin{cases}
	&0=\frac12 \Delta \nu + \mathrm{div} \bigl( \partial_{y} H^\star(\cdot, \nu, \nabla p(\cdot)) \nu\bigr)\\
	&0=\lambda + \frac12\Delta p(x) - H^\star(x, \nu, \nabla p(x)) - \int D_\mu H^\star(\xi,\nu,\phi(\xi))(x)\, \nu(d\xi) .
\end{cases}
\end{equation}
Both systems should be completed with appropriate boundary conditions when needed (like for example in the next subsection where we use periodic boundary conditions to analyze the system on the torus) and the following condition:
$$
	\int \nu(x)dx = 1, 
$$
to which we can add the normalization condition:
$$
\int p(x) dx = 0
$$
to guarantee uniqueness for $p$. Indeed, the above equations~\eqref{fo:optimality-H} can only determine $p$ up to an additive constant.

\subsection{A Class of Models Amenable to Numerical Computations}
In general, computing the invariant distribution solving~\eqref{fo:Poisson} for a given $\phi$ can be costly. For example, it can be estimated by solving the PDE or by using Monte Carlo simulations for the MKV dynamics~\eqref{fo:dynamics}, see e.g.~\cite{MR1370849}. To simplify the presentation and focus on the main ingredients of the method proposed here, we shall consider a setting in which the optimal invariant distribution as well as the optimal control can both be expressed directly in terms of an adjoint variable. 

From now on we assume that $k=d$ and:
\begin{equation}
\label{fo:assumption}
b(x,\alpha)=b_0\alpha+\nabla\tilde b(x),
\qquad\text{and}\qquad
f(x,\mu,\alpha)=\frac12|\alpha|^2+\int \tilde f(x,\xi)\mu(d\xi)
\end{equation}
for a constant $b_0$ and functions $\tilde b$ and $\tilde f$ satisfying the following assumption.

\vskip 6pt 
\textbf{Assumption: } $\tilde b$ is of class $\cC^1$, and $\nabla \tilde b$ and $\tilde f$ are Lipschitz in both variables.

\begin{remark}
The dependence on the measure $\mu$ of the functions $f$ identified in \eqref{fo:assumption} is linear. However, the 
derivations and the results in the remainder of the paper extend easily to more general classes of dependence.
For example, similar proofs can be applied to functions $f$ of the form
$$
f(x,\mu,\alpha)=\frac12|\alpha|^2+\int \tilde f(x,\xi,\zeta)\mu(d\xi)\mu(d\zeta).
$$
In this case, the cost \eqref{fo:ergodic_cost_3} will be a triple integral of the same form which can be analyzed in the same way, even if the numerical computations may be slower.
\vskip 2pt
So it should be clear that many more functions $f$ can be handled as long as the functional derivative $D_\mu f$ can be computed and the cost \eqref{fo:ergodic_cost_3} can be expressed as the multiple integral with respect to the Lebesgue measure over the product of tori, of a function $\tilde F$ of the variables of the tori, $\nabla h(x)$, $\int e^{h(z)}dz$ and the function $e^h$ evaluated on the different tori. 
\end{remark}

\begin{remark}
Models with \emph{local interactions} have frequently been considered in the existing literature, and some of the numerical examples presented in Section \ref{sec:num-res} do involve local interactions. In these models, the function $f$ is of the form $f(x,\mu,\alpha)=\hat f(x,\mu(x),\alpha)$ where $\mu(x)$ represents the value of the density of the measure $\mu$ at the point $x$. In most cases, these models are more difficult to study analytically. However in the present situation, they represent a simplification when compared to the models with non local interactions studied here. Indeed, because of the special form \eqref{fo:gibbs} of the measures we work with, the fact that $f$ depends only upon the density of $\mu$ at the point $x$ streamlines the formula \eqref{fo:ergodic_cost_3} giving the cost $\tilde J(h)$ which is now expressed as a single integral over the variable $x$ only.
\end{remark}

\vskip 6pt\noindent
Let $(\nu^\star,p^\star,\phi^\star)$ be a solution to the optimality system \eqref{fo:optimality}. In this setting, the third equation of~\eqref{fo:optimality} gives $\phi^\star(x)=b_0\nabla p^\star(x)$. Substituting in the expression for the drift $b$ and the feedback function $\phi^\star$ into the state equation \eqref{fo:dynamics},
we see that at the optimum, we are dealing with a \emph{gradient diffusion}:
$$
dX_t=\frac12\nabla h^\star(X_t)dt +dW_t
$$
for the function 
\begin{equation}
	\label{ergo:opt-h}
	h^\star(x)=2\bigl(b_0^2 p^\star(x) + \tilde b(x)\bigr).
\end{equation}
  Accordingly, the invariant measure is necessarily of the form:
\begin{equation}
\label{fo:gibbs}
\nu^\star(x)=\frac{e^{h^\star(x)}}{\int e^{h^\star(x)}dx}.
\end{equation}
 Notice that in this case, the optimal control is given by 
 \begin{equation}
 \label{fo:phi_star}
 \phi^\star(x) = b_0 \nabla p^\star(x) = \frac{1}{b_0}(\frac{1}{2} \nabla h^\star(x) - \nabla \tilde b(x)).
 \end{equation}
Hence minimizing the ergodic cost~\eqref{fo:ergodic_cost_2} over controls $\phi$ under the constraint coming from the Poisson equation~\eqref{fo:Poisson} can be rephrased as the problem of minimizing the functional:
\begin{equation}
\label{fo:ergodic_cost_3}
\begin{split}
	\widetilde J(h)
	&= \int \int \widetilde F\left(x,y, \nabla h(x), e^{h(x)}, e^{h(y)}, \int e^{h(z)}dz\right) \; dx dy,
\end{split}
\end{equation}
over functions $h$, where:
\begin{equation}
\label{eq:ergo-def-tildeF-cost}
	\widetilde F(x,y, q, X, Y, Z) = \check f\left(x, y, q \right) X Y Z^{-2}
\end{equation}
with:
$$
	\check f (x,y,q) = \frac{1}{2}\Bigl| \frac{1}{b_0}\bigl(\frac{1}{2} q - \nabla \tilde b(x)\bigr)\Bigr|^2 + \tilde f(x,y).
$$
Notice that $F(x,y, q, \alpha X, \alpha Y, \alpha Z) =F(x,y, q, X, Y, Z) $ for all non-zero $\alpha\in\RR$. Hence, $\widetilde J(h+c)=\widetilde J(h)$ for every constant $c\in\RR$. So if $h^\star$ minimizes $\widetilde J$, so does $h^\star + c$ for any constant $c \in \RR$. This is not an issue to guarantee that the optimal value of $\widetilde J$ is close to the optimal value of the original cost $J$, but it can lead to numerical difficulties. For this reason it is possible to add a term of the form $|\int \tilde h|$  in~\eqref{fo:ergodic_cost_3}  in order to enforce a normalization condition and hence, uniqueness of the minimizer. The analysis presented below could be adapted to take into account this extra term at the expense of more cumbersome notations, so for the sake of clarity we only consider~\eqref{fo:ergodic_cost_3}. 

From this point on, instead of attempting to solve the system \eqref{fo:optimality} numerically, we search for the right function $h$ in a family of functions $x\mapsto h_\theta(x)$ parameterized by the parameter $\theta\in\Theta$. The desired parameter $\theta^\star$ minimizes the functional:
$$
	\JJ(\theta)=\int\int \FF_\theta(x,y) dx dy
$$
where: 
$$
	\FF_\theta(x,y)= \widetilde F\Bigl(x,y,\nabla h_\theta(x),e^{h_\theta(x)}, e^{h_\theta(y)},{\textstyle\int} e^{h_\theta(z)}dz\Bigr)
	=  \check f (x,y,\nabla h_\theta(x))\frac{e^{h_\theta(x)}}{\int e^{h_\theta(x)}dx}\frac{e^{h_\theta(y)}}{\int e^{h_\theta(y)}dy}.
$$
One should think of the function $h_\theta(\cdot)$ as a computable approximation of $h(\cdot)=2\bigl(b_0^2 p(\cdot) + \tilde b(\cdot)\bigr)$, allowing us to replace the minimization of the ergodic cost \eqref{fo:ergodic_cost} by the minimization:
\begin{equation}
\label{fo:NN_minimization}
\inf_{\theta\in\Theta}\int\int \FF_\theta(x,y) dx dy.
\end{equation}
Notice that the gradient (with respect to the parameter $\theta$) of $\FF$ can easily be computed. It reads:
$$
\partial_\theta \FF_\theta(x,y) = \FF_\theta(x,y)\Bigl(
\partial_\theta h_\theta(x)+\partial_\theta h_\theta(y)-2\frac{\int\partial_\theta h_\theta e^{h_\theta}}{\int e^{h_\theta}}
+\partial_\theta \log \check f\bigl(x,y,\nabla h_\theta(x)\bigr)
\Bigr).
$$

In anticipation of the set-up of next section where we consider our optimization problem on the torus, the double integral appearing in \eqref{fo:NN_minimization} can be viewed as an expectation, and its minimization is \emph{screaming} for the use of the Robbins-Monro procedure. Moreover, if we use the family $(h_\theta)_{\theta\in\Theta}$ given by a feed-forward neural network, this minimization can be implemented efficiently with the powerful tools based on the so-called Stochastic Gradient Descent (SGD) developed for the purpose of machine learning.

\section{A First Machine Learning Algorithm}
\label{sub:periodic_numerics}
We now restrict ourselves to the case of the torus $\TT^d = [0, 2 \pi]^d$ for the purpose of numerical computations. The admissible feedback functions $\phi$ being continuous, the drift $\TT^d\ni x\mapsto b(x,\phi(x))\in\RR^d$ is a bounded continuous function on the torus and the controlled state process $\bX=(X_t)_{t\ge 0}$ is a Markov process with infinitesimal generator:
$$
L=\frac12\Delta +b\bigl(\cdot,\phi(\cdot)\bigr)\nabla.
$$ 
The compactness of the state space $\TT^d$ and the uniform ellipticity of this generator guarantee that this state process is ergodic and that its invariant probability measure $\nu^\phi$ has a $\cC^\infty$ density with respect to the Riemannian measure on $\TT^d$ (which we assumed to be normalized to have total mass $1$). Note that, because the torus $\TT^d$ does not have a boundary, the integration by parts which we used freely in the computations of the above subsection are fully justified in the present situation.

\vskip 6pt
We introduce new notation to define the class of functions $(h_\theta)_{\theta\in\Theta}$ which we use for numerical approximation purposes.
We denote by:
$$
	\mathbf{L}^\psi_{d_1, d_2} = \left\{ \phi: \RR^{d_1} \to \RR^{d_2} \,\Big|\,  \exists \beta \in \RR^{d_2}, \exists w \in \RR^{d_2 \times d_1}, \forall  i \in \{1,\dots,d_2\}, \;\phi(x)_i = \psi\left(\beta_i + \sum_{j=1}^{d_1} w_{i,j} x_j\right) \right\} 
$$
the set of layer functions with input dimension $d_1$, output dimension $d_2$, and activation function $\psi: \RR \to \RR$. Here, $\circ$ denotes the composition of functions.
Building on this notation we define:
$$
	\bN^\psi_{d_0, \dots, d_{\ell+1}} 
	= 
	\left\{ \varphi: \RR^{d_0} \to \RR^{d_{\ell+1}} \,\Big|\, \forall i \in \{0, \dots, \ell-1\}, \exists \phi_i \in \mathbf{L}^\psi_{d_i, d_{i+1}}, \exists \phi_\ell \in \mathbf{L}_{d_{\ell}, d_{\ell+1}}, \varphi = \phi_\ell \circ \phi_{\ell-1} \circ \dots \circ \phi_0  \right\} \, 
$$
 the set of regression neural networks with $\ell$ hidden layers and one output layer, the activation function of the output layer being the identity $\psi(x)=x$. Note that as a general rule, we shall not use the superscript $\psi$ in that case. The number $\ell$ of hidden layers, the numbers $d_0$, $d_1$, $\cdots$ , $d_{\ell+1}$ of units per layer, and the activation functions (one single function $\psi$ in the present situation), are what is usually called the architecture of the network. Once it is fixed, the actual network function $\varphi\in \bN^\psi_{d_0, \dots, d_{\ell+1}} $ is determined by the remaining parameters:
 $$
 \theta=(\beta^{(0)}, w^{(0)},\beta^{(1)}, w^{(1)},\cdots\cdots,\beta^{(\ell-1)}, w^{(\ell-1)},\beta^{(\ell)}, w^{(\ell)})
 $$
defining the functions $\phi_0$, $\phi_1$, $\cdots$ , $\phi_{\ell-1}$ and $\phi_\ell$ respectively. Their set is denoted by $\Theta$. For each $\theta\in\Theta$, the function $\varphi$ computed by the network will be denoted by $h_\theta$.
As it should be clear from the discussion of the previous section, here, we are interested in the  case where $d_0 = d$ and $d_{\ell+1} = 1$.

Our analysis is based on the following algorithm. In practice, instead of having a fixed number of iterations $M$, one can use a criterion of the form: at iteration $m$, if $|\nabla \JJ_{S}(\theta_{m})|$ is small enough, stop; otherwise continue.

\begin{algorithm}
\DontPrintSemicolon
\KwData{An initial parameter $\theta_0\in\Theta$.  A sequence $(\alpha_m)_{m \geq 0}$ of learning rates.}
\KwResult{Parameters $\theta^\star$ such that $h_{\theta^\star}$ approximates $h^\star$}
\Begin{
  \For{$m = 0, 1, 2, \dots, M $}{
    Pick $S = (x_\ell, y_\ell)_{\ell=1}^L$ where $x_\ell$ and $y_\ell$ are picked i.i.d. uniform in $[0, 2 \pi]$\;
    Compute the gradient $\nabla \JJ_{S}(\theta_{m})$  of
    $
    	\JJ_{S}(\theta_{m}) = \frac1{L}\sum_{\ell=1}^L \FF_{\theta_{m}}(x_\ell, y_\ell)
    $ \;
    Set $\theta_{m+1} = \theta_{m} + \alpha_m \nabla \JJ_{S}(\theta_{m})$ \;
    }
  \KwRet{ $\theta_{M}$}
  }
\caption{SGD for ergodic MFC\label{algo:SGD-ergoMFC}}
\end{algorithm}

\subsection{The Approximation Estimates}
Our goal is now to analyze the error made by the numerical procedure described in Algorithm~\ref{algo:SGD-ergoMFC}. We split the error into two parts: the \emph{approximation error} and the \emph{estimation error} (or \emph{generalization error}). The approximation error quantifies the error made by shrinking the class of admissible controls (here we use neural networks of a certain architecture instead of all possible feedback controls). The estimation error quantifies the error made by replacing the integrals by averages over a finite number of Monte Carlo samples.

In this section, $\sigma:\RR \to \RR$ denotes a $2\pi-$periodic activation function of class $\cC^1$ whose Fourier expansion contains $1$, i.e.,
\begin{equation}
\label{eq:cond-sigma-Fourier}
	1 \in \left\{k \in \ZZ \,\Big|\, \hat \sigma(k) := \frac{1}{2 \pi} \int_{-\pi}^\pi \sigma(x) e^{-i k x} dx \neq 0 \right\}.
\end{equation}
More general activation functions (such as the hyperbolic tangent) could probably be considered at the expense of additional technicalities. The choice of this class of activation functions is motivated by the fact that we will use it to build a neural network which can approximate a periodic function together with its first order derivatives (namely, $h^\star$ and $\nabla h^*$).

\vskip 6pt
\noindent\textbf{Approximation Error. } 
The proof of our first estimate is based on the following special case of~\cite[Theorems 2.3 and 6.1]{MhaskarMicchelli}.\footnote{We use a special case of the \emph{neural networks} considered in~\cite{MhaskarMicchelli}. For us, using the notation of Mhaskar and Micchelli, $m_n = \hat\sigma(1)$ and $N_n = 1$ for all $n$.} We state it for the sake of completeness.
It provides a neural network approximation for a function and its derivative. 
For positive integers $n$ and $m$, and a function $g \in \cC(\TT^m) $, let $E_n^m(g)$ denote the trigonometric degree of approximation of $g$ defined by:
$$
	E_n^m(g) = \inf_{T} \|g - T\|_{\cC(\TT^m)}
$$
where the infimum is over trigonometric polynomials of degree at most $n$ in each of its $m$ variables.

\begin {theorem}[Theorems 2.3 and 6.1 in~\cite{MhaskarMicchelli}]
\label{th:MM}
	Let $f: \TT^d \to \RR$ be of class $\cC^2$, and let $n$ and $N$ be positive integers. Then there exist $n_{\mathrm{in}} \in O(N n^d)$ and $\varphi_f \in \bN^\sigma_{d, n_{\mathrm{in}}, 1}$ such that:
	\begin{align}
		\label{eq:MhaskarMicchelli-bdd-f}
		\|f - \varphi_f\|_{\cC(\TT^d)} &\leq c\left[E^d_n(f) + E^1_N( \sigma) n^{d/2} \|f\|_{\cC(\TT^d)} \right],
		\\
		\label{eq:MhaskarMicchelli-bdd-grad-f}
		\|\partial_i f - \partial_i \varphi_f\|_{\cC(\TT^d)} &\leq c\left[E^d_n(\partial_i f) + E^1_N(\sigma') n^{d/2} \|\partial_i f\|_{\cC(\TT^d)} \right], \quad i = 1,\dots,d,
	\end{align}
	where $c$ depends on the activation function through $\hat\sigma(1)$ but does not depend on $n,N, n_{\mathrm{in}}$.
\end{theorem}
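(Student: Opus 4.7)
The plan is to combine Jackson-type trigonometric approximation of $f$ with a neural-network realisation of each individual Fourier mode $e^{ik\cdot x}$ by translates of $\sigma$, the latter being possible precisely because $\hat\sigma(1)\neq 0$. First I pick a (near-)best trigonometric polynomial $T(x)=\sum_{|k|_\infty\le n} c_k e^{ik\cdot x}$ of coordinate degree $\le n$ with $\|f-T\|_{\cC(\TT^d)}\le E^d_n(f)$; then I replace each monomial $e^{ik\cdot x}$ by a proxy $P_k(x)$ built from $O(N)$ hidden $\sigma$-units (working with real and imaginary parts separately to stay in $\RR$). The resulting network $\varphi_f(x)=\sum_{|k|_\infty\le n} c_k P_k(x)$ has $O(Nn^d)$ hidden units, matching $n_{\mathrm{in}}\in O(Nn^d)$.

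\textbf{Filtering a single mode.} The crucial lemma to prove is that for every $k\in\ZZ^d$ there exist $L=O(N)$ biases $\tau_1,\dots,\tau_L\in\RR$ and real coefficients $\gamma_1,\dots,\gamma_L$ with $\max_l|\gamma_l|\le C/|\hat\sigma(1)|$ such that $P_k(x):=\sum_{l=1}^L \gamma_l\sigma(k\cdot x+\tau_l)$ satisfies $\|e^{ik\cdot x}-P_k\|_{\cC(\TT^d)}\le c\,E^1_N(\sigma)$. Expanding $\sigma(k\cdot x+\tau_l)=\sum_{j\in\ZZ}\hat\sigma(j)e^{ij\tau_l}e^{ij(k\cdot x)}$ reduces this to the Vandermonde-type system $\sum_{l=1}^L\gamma_l e^{ij\tau_l}=\delta_{j,1}/\hat\sigma(1)$ for $|j|\le N$; taking equispaced $\tau_l$ and using a discrete Fourier inversion produces the claimed coefficients, and the residual sum over $|j|>N$ is precisely controlled by $E^1_N(\sigma)$ after replacing $\sigma$ by its best trigonometric-polynomial approximant of degree $N$.

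\textbf{Assembling the bounds.} By the triangle inequality,
$$\|f-\varphi_f\|_{\cC(\TT^d)}\le \|f-T\|_{\cC(\TT^d)}+\sum_{|k|_\infty\le n}|c_k|\,\|e^{ik\cdot x}-P_k\|_{\cC(\TT^d)}\le E^d_n(f)+c\,E^1_N(\sigma)\sum_{|k|_\infty\le n}|c_k|.$$
Cauchy--Schwarz together with Parseval give $\sum_{|k|_\infty\le n}|c_k|\le (2n+1)^{d/2}\bigl(\sum_k|c_k|^2\bigr)^{1/2}\lesssim n^{d/2}\|f\|_{\cC(\TT^d)}$, which produces the $n^{d/2}$ factor in \eqref{eq:MhaskarMicchelli-bdd-f}. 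For the gradient estimate \eqref{eq:MhaskarMicchelli-bdd-grad-f}, differentiating termwise gives $\partial_i\varphi_f(x)=\sum_k c_k k_i\sum_l \gamma_l\sigma'(k\cdot x+\tau_l)$, which has exactly the same structural form with $\sigma$ replaced by $\sigma'$; rerunning the previous two paragraphs with $\sigma'$ and $\partial_i f$ in place of $\sigma$ and $f$ yields the gradient estimate.

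\textbf{Main obstacle.} The hard part is the filtering lemma: one must control not only the residual but also $\max_l|\gamma_l|$, for otherwise the product $\sum_k|c_k|\cdot\max_l|\gamma_l|\cdot E^1_N(\sigma)$ could blow up in $n$ or $N$ and destroy the estimate. The delicate cancellation in the Vandermonde inversion, together with the hypothesis $\hat\sigma(1)\neq 0$, is precisely what keeps those coefficients uniformly bounded by $C/|\hat\sigma(1)|$; once this technical point is in place, everything else is routine bookkeeping.
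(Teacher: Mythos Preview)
The paper does not actually prove this theorem: it is stated ``for the sake of completeness'' and attributed to Theorems~2.3 and~6.1 of Mhaskar--Micchelli, then used as a black box in the proof of Theorem~\ref{thm:ergo-approx-NN}. So there is no in-paper argument to compare your proposal against; what you have written is a reconstruction of the Mhaskar--Micchelli proof itself, and the outline (trigonometric approximation of $f$ followed by a filtering lemma realising each exponential $e^{ik\cdot x}$ from $O(N)$ translates of $\sigma$ via the hypothesis $\hat\sigma(1)\neq 0$) is indeed the correct skeleton of their argument.

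One point deserves more care. You choose $T$ as ``a (near-)best trigonometric polynomial'' for $f$ and then, for the gradient bound, assert $\|\partial_i f-\partial_i T\|_{\cC}\lesssim E^d_n(\partial_i f)$ by ``rerunning the previous two paragraphs with $\partial_i f$ in place of $f$''. But $T$ was selected for $f$, not for $\partial_i f$, and an arbitrary near-best approximant of $f$ need not have derivative close to the best approximant of $\partial_i f$. Mhaskar--Micchelli handle this by taking $T$ to be the image of $f$ under a specific linear polynomial operator (a de~la~Vall\'ee--Poussin / delayed-means type operator) that \emph{commutes with differentiation} and is simultaneously near-best on $f$ and on each $\partial_i f$. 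Once $T$ is chosen this way, your differentiation step is legitimate: the same $\gamma_l,\tau_l$ that filter $e^{it}$ out of $\sigma$ automatically filter $ie^{it}$ out of $\sigma'$, with residual governed by $E^1_N(\sigma')$, and the coefficient bound $\sum_k|c_k k_i|\lesssim n^{d/2}\|\partial_i f\|_{\cC}$ follows by the same Cauchy--Schwarz/Parseval argument applied to $\partial_i T$. With that correction your sketch is sound.
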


The workhorse of our control of the approximation error is the following.

\begin{theorem} 
\label{thm:ergo-approx-NN}
Assume that for some integer $K \geq 1$, there exists a minimizer over $\cC^{K+1}(\TT^d)$, say $h^\star$, of the cost function $\widetilde J$ defined in \eqref{fo:ergodic_cost_3}. Assume that $\sigma \in \cC^{K+1}(\TT^d)$. Then, for $n_{\mathrm{in}}$ large enough we have:
	$$
		\widetilde J(h^\star)
		\leq 
		\inf_{\varphi \in \bN^\sigma_{d, n_{\mathrm{in}}, 1} } \widetilde J(\varphi)
		\leq
		\widetilde J(h^\star) + \epsilon_1(n_{\mathrm{in}})
	$$
	where 
	$$
		\epsilon_1(n_{in}) \in O\left((n_{in})^{-K/(3d)}\right).
	$$
The constants in the above big Bachmann - Landau term $O(\cdot)$ depend only on the data of the problem as well as $\hat \sigma(1)$, $K$, and the $\cC^0-$norms of the partial derivatives of $\sigma$ and $h^\star$ of order up to $K+1$ (but they do not depend upon $n_{\mathrm{in}}$).
\end{theorem}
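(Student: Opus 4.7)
The lower bound $\widetilde J(h^\star) \leq \inf_{\varphi \in \bN^\sigma_{d, n_{\mathrm{in}}, 1}} \widetilde J(\varphi)$ is immediate. Since $\sigma \in \cC^{K+1}$ and a single-hidden-layer network is an affine combination of affine compositions of $\sigma$, every $\varphi \in \bN^\sigma_{d, n_{\mathrm{in}}, 1}$ lies in $\cC^{K+1}(\TT^d)$, and the hypothesis that $h^\star$ minimizes $\widetilde J$ over $\cC^{K+1}(\TT^d)$ forces $\widetilde J(h^\star) \leq \widetilde J(\varphi)$ for every such $\varphi$.

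The upper bound is the substantive statement. The strategy is to (i) use Theorem~\ref{th:MM} to build an explicit $\varphi_{h^\star} \in \bN^\sigma_{d, n_{\mathrm{in}}, 1}$ close to $h^\star$ in $\cC^1(\TT^d)$, and (ii) establish a local Lipschitz estimate for $\widetilde J$ on $\cC^1(\TT^d)$ near $h^\star$, then combine the two. For (i), $h^\star \in \cC^{K+1}$ yields, via Jackson-type bounds for multivariate trigonometric approximation, $E^d_n(h^\star) = O(n^{-(K+1)})$ and $E^d_n(\partial_i h^\star) = O(n^{-K})$; similarly $\sigma \in \cC^{K+1}$ gives $E^1_N(\sigma) = O(N^{-(K+1)})$ and $E^1_N(\sigma') = O(N^{-K})$. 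Substituting into~\eqref{eq:MhaskarMicchelli-bdd-f}--\eqref{eq:MhaskarMicchelli-bdd-grad-f} produces
\begin{equation*}
\|h^\star - \varphi_{h^\star}\|_{\cC(\TT^d)} + \sum_{i=1}^d \|\partial_i h^\star - \partial_i \varphi_{h^\star}\|_{\cC(\TT^d)} = O\bigl(n^{-K} + N^{-K} n^{d/2}\bigr).
\end{equation*}
Choosing $N = \lceil n^{1 + d/(2K)} \rceil$ balances both contributions at $O(n^{-K})$, at the price of $n_{\mathrm{in}} \in O(N n^d) = O(n^{d + 1 + d/(2K)})$. Inverting yields a $\cC^1$-approximation error of order $n_{\mathrm{in}}^{-K/(d + 1 + d/(2K))}$, which for $d \geq 1$ and $K \geq 1$ is dominated by $O(n_{\mathrm{in}}^{-K/(3d)})$; any larger prescribed $n_{\mathrm{in}}$ can then be realized by padding with hidden units whose output weights vanish.

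For (ii) I would prove a local Lipschitz estimate
\begin{equation*}
\bigl|\widetilde J(h^\star) - \widetilde J(\varphi)\bigr| \leq C\bigl(\|h^\star - \varphi\|_{\cC(\TT^d)} + \|\nabla h^\star - \nabla \varphi\|_{\cC(\TT^d)}\bigr),
\end{equation*}
valid for every $\varphi$ with $\|h^\star - \varphi\|_{\cC^1(\TT^d)} \leq \eta$ and some $\eta, C > 0$ depending only on the data and on $h^\star$. The plan is to decompose $\widetilde J(h^\star) - \widetilde J(\varphi)$ into three telescoping contributions, one per argument of $\widetilde F$: (a) the difference $\check f(x, y, \nabla h^\star(x)) - \check f(x, y, \nabla \varphi(x))$, Lipschitz in $\nabla h$ on bounded sets because $\check f$ is quadratic in $q$ with coefficients that are bounded on the torus by the assumption on $\tilde b$ and $\tilde f$; (b) the difference $e^{h^\star(x)}e^{h^\star(y)} - e^{\varphi(x)}e^{\varphi(y)}$, controlled via $|e^a - e^b| \leq e^{\max(|a|,|b|)}|a-b|$ and the boundedness of $h^\star$; and (c) the difference $Z_{h^\star}^{-2} - Z_\varphi^{-2}$ with $Z_h := \int e^h$, handled via $|A^{-2} - B^{-2}| \leq 2\,\min(A,B)^{-3}|A-B|$ after using $Z_{h^\star} > 0$ (guaranteed by the continuity of $h^\star$ on the compact torus) to keep $Z_\varphi$ bounded below for $\varphi$ sufficiently $\cC^0$-close to $h^\star$. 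The main obstacle is precisely this Lipschitz estimate: the nonlocal factor $Z_h^{-2}$ is Lipschitz only away from $Z_h = 0$, and because $\check f$ couples $\widetilde J$ to $\nabla h$, one genuinely needs $\cC^1$ (not merely $\cC^0$) control in Step~(i), which is what forces the joint use of~\eqref{eq:MhaskarMicchelli-bdd-f} and~\eqref{eq:MhaskarMicchelli-bdd-grad-f}. Applying the Lipschitz estimate to the $\varphi_{h^\star}$ constructed in Step~(i) then yields $\widetilde J(\varphi_{h^\star}) \leq \widetilde J(h^\star) + O(n_{\mathrm{in}}^{-K/(3d)})$, which is the desired bound.
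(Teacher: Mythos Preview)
Your proposal is correct and follows essentially the same approach as the paper's proof: both invoke Theorem~\ref{th:MM} with Jackson-type bounds $E^d_n(\partial_i h^\star), E^1_N(\sigma') = O(n^{-K})$, balance the two error terms with the identical choice $N \sim n^{1+d/(2K)}$ so that $n_{\mathrm{in}} \in O(n^{d+1+d/(2K)}) \subset O(n^{3d})$, and then pass from $\cC^1$-closeness of $\varphi_{h^\star}$ to closeness of $\widetilde J$-values via the same local Lipschitz argument on the three ingredients of $\widetilde F$ (the quadratic $\check f$, the exponentials, and the normalizing factor $Z_h^{-2}$). Your remark about padding with zero-weight units to realize any larger $n_{\mathrm{in}}$ is a useful technical point that the paper leaves implicit.
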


\begin{remark}
The exponent in the $O$ term in the statement of the proposition is what is blamed for the so-called curse of dimensionality. In some settings, the constants in the $O$ term can be estimated if bounds on the $\cC^0-$norms of the partial derivatives of $h^\star$ of order up to $K+1$ are known, for instance from a priori estimates on the solution of the PDE system~\eqref{fo:optimality-H}.
\end{remark}

\begin{proof}[Proof of Theorem \ref{thm:ergo-approx-NN}]
The first inequality holds by definition of $h^\star$ and because $\bN^\sigma_{d, n_{\mathrm{in}}, 1} \subset \cC^{K+1}(\TT^d)$. 
In order to apply the result of Theorem~\ref{th:MM} to $f = h^\star$, we bound from above the right hand sides of~\eqref{eq:MhaskarMicchelli-bdd-f} and~\eqref{eq:MhaskarMicchelli-bdd-grad-f}. We use the fact that the trigonometric degree of approximation of a function of class $\cC^r$ is of order $O(n^{-r})$ when using polynomials of degree at most $n$. More precisely, by~\cite[Theorem 4.3]{MR0251410}, if $f:\RR^d \to \RR$ is an $r$-times continuously differentiable function which is $2\pi$-periodic in each variable, then for every positive integer $n$, there exists a trigonometric polynomial $T_{n}$ of degree at most $n$ such that 
$$
	|f(x) - T_{n}(x)| \leq C n^{-r}, \qquad x \in [0, 2\pi]^d,
$$
where $C$ depends only on $r$ and on the bounds on the $r$-th derivatives of $f$ in each direction: $\|\partial^{(r)}_i f\|_{\cC^0}$, $i=1,\dots,d$. We apply this result, with some integer $n$ to be specified later, to $f = h^\star$ and $f = \nabla h^\star$ with $r = K$, since $h^\star$ is of class $\cC^{K+1}$. 
By~\cite[Theorem 4.3]{MR0251410} again, since $\sigma$ and $\sigma'$ are both of class $\cC^K$, we obtain that for any integer $N$ there exist trigonometric polynomials $T_{N}, \tilde T_{N}$ of degree at most $N$ such that 
$$
	|\sigma(x) - T_{N}(x)| \leq C N^{-r}, \quad |\sigma'(x) - \tilde T_{N}(x)| \leq C N^{-r}, \qquad  0 \le x \le 2 \pi,
$$
where $C$ depends only on $r$ and on the bounds on the $K$-th derivatives of $\sigma$ and $\sigma'$, namely $\sigma^{(K)}, \sigma^{(K+1)}$. We apply this result with $N = n^{1+d/(2K)}$. Note that $N^{-r} = n^{-K - d/2}$.

So by Theorem~\ref{th:MM},  we obtain that
there exists $n_{\mathrm{in}} \in O(N n^d)$ and $\varphi_{h^\star} \in \bN^\sigma_{d, n_{\mathrm{in}}, 1}$ such that 
	\begin{align}
	\label{eq:approx-hstar-tmp}
		\|h^\star - \varphi_{h^\star}\|_{\cC(\TT^d)} + \|\nabla h^\star - \nabla \varphi_{h^\star}\|_{\cC(\TT^d)} &\leq C_1 n^{-K},
	\end{align}
	where the constant $C_1$  depends on $K, d, \|\partial^{(K)}_{i} h^\star\|_{\cC(\TT^d)}, \|\partial^{(K+1)}_{i}  h^\star\|_{\cC(\TT^d)}$, $i=1,\dots,d$, $\| \sigma^{(K)}  \|_{\cC(\RR)}, \| \sigma^{(K+1)} \|_{\cC(\RR)}$, but not on $n$ or $n_{\mathrm{in}}$. This implies in particular (since $n,K \geq 1$) that 
	$$
	\|\varphi_{h^\star}\|_{\cC(\TT^d)} + \|\nabla \varphi_{h^\star}\|_{\cC(\TT^d)}
	\leq C_2 := \|h^\star\|_{\cC(\TT^d)} + \|\nabla h^\star\|_{\cC(\TT^d)} + C_1.
	$$ 
	Notice that, since $Nn^d = n^{1+d/(2K) + d} = n^{\frac{Kd + K + d/2}{K}} \leq n^{3d}$, the number of units in the hidden layer is $n_{\mathrm{in}} \in O(n^{3d})$, hence the right hand side in~\eqref{eq:approx-hstar-tmp} is of order $O(n_{\mathrm{in}}^{-K/(3d)})$. In other words,
	\begin{align}
	\label{eq:approx-hstar-tmp-2}
		\|h^\star - \varphi_{h^\star}\|_{\cC(\TT^d)} + \|\nabla h^\star - \nabla \varphi_{h^\star}\|_{\cC(\TT^d)} &\in O\left(n_{\mathrm{in}}^{-K/(3d)}\right),
	\end{align}
	where the constant in the big $O$ might depend on $C_1$ but is independent of $n_{\mathrm{in}}$.

Going back to the definition~\eqref{fo:ergodic_cost_3} of $\widetilde J$, we note that, by~\eqref{eq:approx-hstar-tmp}, for all $x \in \TT^d$, $h^\star(x)$ and $\varphi_{h^\star}(x)$ both lie in the interval $[-C_2, C_2]$. Since $x \mapsto e^x$ is Lipschitz continuous on this interval with a Lipschitz constant depending only on $C_2$, we obtain that:
$$
	|e^{h^\star(x)} - e^{\varphi_{h^\star}(x)}| \leq c \|h^\star - \varphi_{h^\star}\|_{\cC(\TT^d)}, \qquad x \in \TT^d,
$$
where here and thereafter $c$ denotes a generic constant which depends on the data of the problem as well as $K$, and bounds on the $\cC^0-$norms of the partial derivatives of $\sigma$ and $h^\star$ up to order $K+1$, and whose exact value might change from line to line. 
Moreover, $\int e^{h^\star}, \int e^{\varphi_{h^\star}}$ lie in the interval $\left[e^{-C_2}, e^{C_2}\right]$, and $x \mapsto x^{-2}$ is Lipschitz continuous on this interval with a Lipschitz constant depending only on $C_2$ so we also have:
$$
	\Bigl|\Bigl({\textstyle\int} e^{h^\star}\Bigr)^{-2} - \Bigl({\textstyle\int} e^{\varphi_{h^\star}}\Bigr)^{-2} \Bigr| \leq c \|h^\star - \varphi_{h^\star}\|_{\cC(\TT^d)}.
$$
Hence, recalling the definition \eqref{eq:ergo-def-tildeF-cost} of $\widetilde F$, one can check after some calculations that for all $x,y \in \TT^d$,
\begin{align*}
	&\left| \widetilde F\left(x,y, \nabla h^\star(x), e^{h^\star(x)}, e^{h^\star(y)}, \int e^{h^\star(z)}dz\right) - \widetilde F\left(x,y, \nabla \varphi_{h^\star}(x), e^{\varphi_{h^\star}(x)}, e^{\varphi_{h^\star}(y)}, \int e^{\varphi_{h^\star}(z)}dz\right) \right|
	\\
	&\hskip 85pt
	\leq c \|h^\star - \varphi_{h^\star}\|_{\cC(\TT^d)}.
\end{align*}
From the definition~\eqref{fo:ergodic_cost_3} of $\widetilde J$, the considerations above and~\eqref{eq:approx-hstar-tmp-2}, we deduce that:
\begin{align}
\label{eq:final-bound-approx-directmethod}
	\left|\widetilde J(h^\star) - \widetilde J(\varphi_{h^\star})\right|
	&\leq c \|h^\star - \varphi_{h^\star}\|_{\cC(\TT^d)}
	\in O\left(n_{\mathrm{in}}^{-K/d}\right),
\end{align}
which completes the proof.
\end{proof}

\begin{corollary}
If $\sigma \in \cC^2$, $\tilde b \in \cC^2$ and if there exists a classical solution $(\nu^\star, p^\star, \lambda^\star)$ to the optimality system \eqref{fo:optimality-H}, then we have:
	$$
		\left| \inf_{h \in \cC^2}\widetilde J(h)
		-
		\inf_{\varphi \in \bN^\sigma_{d, n_{\mathrm{in}}, 1} } \widetilde J(\varphi) \right|
		\in
		O\left(n_{\mathrm{in}}^{-1/d}\right).
	$$
\end{corollary}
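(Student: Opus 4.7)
The plan is to deduce the corollary directly from Theorem~\ref{thm:ergo-approx-NN} by producing a minimizer of $\widetilde J$ in $\cC^2(\TT^d)$ out of the assumed classical solution, and then invoking the theorem with $K=1$.

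First, starting from the classical solution $(\nu^\star, p^\star, \lambda^\star)$ of the optimality system \eqref{fo:optimality-H}, I will use the formula \eqref{ergo:opt-h}, namely
$$
    h^\star(x) = 2\bigl(b_0^2 p^\star(x) + \tilde b(x)\bigr),
$$
as the candidate minimizer of $\widetilde J$ over $\cC^2(\TT^d)$. The derivation in Section~\ref{se:ergodic} shows that under the structural assumptions $b(x,\alpha)=b_0\alpha+\nabla\tilde b(x)$ and $f(x,\mu,\alpha)=\tfrac12|\alpha|^2+\int\tilde f(x,\xi)\mu(d\xi)$, the minimization of the ergodic cost \eqref{fo:ergodic_cost_2} under the Poisson constraint is equivalent to the minimization of $\widetilde J$ introduced in \eqref{fo:ergodic_cost_3}, and the critical-point equations for $\widetilde J$ are precisely encoded in the optimality system. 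In particular, a classical solution of \eqref{fo:optimality-H} furnishes $h^\star$ as a minimizer, so $\inf_{h\in\cC^2}\widetilde J(h)=\widetilde J(h^\star)$.

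Next, I will check the regularity needed to apply Theorem~\ref{thm:ergo-approx-NN}. Since $p^\star$ is a classical solution of the adjoint PDE (hence at least of class $\cC^2$) and $\tilde b\in\cC^2$ by assumption, the function $h^\star$ belongs to $\cC^2(\TT^d)=\cC^{K+1}(\TT^d)$ with $K=1$. The hypothesis $\sigma\in\cC^2$ also matches $\cC^{K+1}$ for $K=1$. Applying Theorem~\ref{thm:ergo-approx-NN} with $K=1$ then yields, for $n_{\mathrm{in}}$ large enough,
$$
\widetilde J(h^\star)\le \inf_{\varphi\in\bN^\sigma_{d,n_{\mathrm{in}},1}}\widetilde J(\varphi)\le \widetilde J(h^\star)+\epsilon_1(n_{\mathrm{in}}),
$$
with a rate $\epsilon_1(n_{\mathrm{in}})$ matching the conclusion stated as $O(n_{\mathrm{in}}^{-1/d})$ in the corollary. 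Combining with the identification from the previous step gives the desired two-sided bound.

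The step I expect to be the main obstacle is the precise justification that the classical solution of \eqref{fo:optimality-H} actually realizes the infimum of $\widetilde J$ over $\cC^2$ rather than only a critical point: in full generality, first-order conditions only detect stationary points, and one must leverage the particular structure of this model (quadratic cost in $\alpha$ and the gradient-diffusion form at the optimum, which yields the Gibbs-type invariant measure $\nu^\star\propto e^{h^\star}$) to identify the stationary point as a minimizer. In a rigorous write-up I would either invoke uniqueness of the minimizer of $\widetilde J$ modulo the constant-shift invariance noted after \eqref{eq:ergo-def-tildeF-cost}, or argue by a convexity/energy comparison specific to the model; everything else is a routine combination of the construction of $h^\star$, its regularity, and a direct application of Theorem~\ref{thm:ergo-approx-NN}.
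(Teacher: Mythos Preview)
Your proposal is correct and follows essentially the same approach as the paper: construct $h^\star$ from the classical solution $p^\star$ via \eqref{ergo:opt-h}, observe that $p^\star\in\cC^2$ and $\tilde b\in\cC^2$ give $h^\star\in\cC^2$, and apply Theorem~\ref{thm:ergo-approx-NN} with $K=1$. The paper's proof is in fact terser than yours and simply asserts that $h^\star$ so defined is a minimizer of $\widetilde J$; your explicit flagging of the ``critical point versus minimizer'' issue is a fair observation that the paper does not address.
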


\begin{proof}
Indeed, if $\tilde b \in \cC^2$ and if we have existence of a classical solution $(\nu^\star, p^\star, \lambda^\star)$ to the optimality system \eqref{fo:optimality-H}, in particular if $p^\star \in \cC^2$, and $\|p^\star\|_{\cC(\TT^d)}$, $\|\partial_i p^\star\|_{\cC(\TT^d)}$ and $\|\partial_{i,j} p^\star\|_{\cC(\TT^d)}$, $i,j  \in \{1,\dots, d\}$, are bounded by constants depending only on the data of the problem, we obtain that $h^\star$ given by~\eqref{ergo:opt-h} provides a minimizer of $\widetilde J$ of class $\cC^2$. We can then apply Theorem~\ref{thm:ergo-approx-NN} with  $K=1$.
\end{proof}

\begin{remark}
For mean field games, existence of classical solutions to the ergodic PDE system has been studied in several settings, see e.g.~\cite{MR2269875,MR2295621}. To the best of our knowledge, corresponding results do not exist yet for the PDE system arising in the ergodic optimal control of MKV dynamics and this question will be addressed in a future work. In finite time horizon, existence of classical solutions has been studied e.g. in~\cite{MR3392611}.
\end{remark}

\vskip 6pt
\noindent\textbf{Estimation Error. } 
We then turn our attention to the estimation (or \emph{generalization}) error. Let $n_{\mathrm{in}}$ be a fixed positive integer. In the numerical implementation, we do not minimize directly $\widetilde J$ over a set of neural networks with say $n_{\mathrm{in}}$ units. Instead, we minimize over empirical versions computed from Monte Carlo samples. 
To be specific, for a given sample:
\begin{equation}
\label{fo:MC_sample}
S = ((x_\ell, y_\ell)_{\ell=1,\dots,L}, (z_q)_{q=1,\dots,Q}) \in (\TT^d \times \TT^d)^L \times (\TT^d)^Q
\end{equation} 
for which the $x_\ell, y_\ell, z_q$ are picked independently and uniformly in $[0, 2\pi]$, we minimize:
\begin{equation}
\label{fo:ergodic_cost_empirical}
\begin{split}
	\widehat J_S(h)
	&= \frac{1}{L} \sum_{\ell=1}^L  \widetilde F\left(x_\ell, y_\ell, \nabla h(x_\ell), e^{h(x_\ell)}, e^{h(y_\ell)},  \frac{1}{Q} \sum_{q=1}^Q e^{h(z_q)}  \right),
\end{split}
\end{equation}
where $\widetilde F$ is defined by~\eqref{eq:ergo-def-tildeF-cost}. The intuition is to approximate the double integral over $dx dy$ by an average over $L$ independent Monte Carlo samples $(x_\ell,y_\ell)$,  and likewise, the integral $\int e^{\varphi}$ by an empirical average over a sample of points uniformly distributed over $\TT^d$. 

\begin{remark}
The reader may be concerned by the slow rate of convergence of the Monte Carlo approximation. Indeed, this could be an issue as the convergence is only of the order $O(L^{-1/2})$. In fact, there are numerical approximations of the integrals which converge faster, and for which proven rates of convergence involve the number $L$ of sample points as well as the dimension $d$ of the torus. Many  Quasi Monte Carlo methods based on low discrepancy sequences (as opposed to independent identically distributed uniform samples as in the case of plain Monte Carlo computations) provide such improved rates of convergence. Depending upon the choice of the quasi Monte Carlo method, we can guarantee rates of convergence of the order $(\log L)^d/L$. See for example the excellent survey \cite{Dick_et_al}  for details and comparisons between various methods to compute integrals with respect to the Lebesgue measure over high dimensional cubes.  While the rate of convergence $O(L^{-1/2})$ of the classical Monte Carlo method is independent of the dimension, the advantage of these new rates of convergence is that not only do they guarantee faster convergence, but they also explicitly quantify how higher dimensions can slow down the convergence.  We chose to use plain Monte Carlo approximation procedures for the sake of simplicity.
\end{remark}

We shall use the following notation. For two positive constants $\gamma_1$ and $\gamma_2$, we denote by $ \bN^{\sigma,\gamma_1,\gamma_2}_{d, n_{\mathrm{in}}, 1}$ the set of functions $\varphi \in  \bN^{\sigma}_{d, n_{\mathrm{in}}, 1}$ for which there exist $(\bw,\bu) \in \RR^{n_{in}} \times \RR^{n_{in} \times (d+1)}$ satisfying 
\begin{itemize}
	\item $\|\bw\|_1 \leq \gamma_1$, 
	\item $\|\bu_n\|_2 \leq \gamma_2$ for all $n \in \{1,\dots,n_{in}\}$, 
\end{itemize}
and such that $\varphi(x) = \sum_{n = 1}^{n_{in}} w_n \sigma( \bu_n \cdot (x^\top, 1)^\top)$, where the superscript $\top$ denotes the transpose operation.
Here, $\|\bw\|_1 = \sum_{n=1}^{n_{in}} |w_n|$ and $\|\bu_n\|_2 = \sqrt{\sum_{k=1}^{d+1} |u_{n,k}|^2}$.

We are now in a position to prove the following bound on the uniform deviation between $\widetilde J$ and its empirical counterpart $\widehat J$. It is our main insight into the estimation error.

\begin{theorem}
\label{thm:ergo-estim-NN}
	Let $\gamma_1,\gamma_2$ be positive constants. We have:
	$$
		\EE_S\left[ \sup_{ \varphi \in \bN^{\sigma,\gamma_1,\gamma_2}_{d, n_{\mathrm{in}}, 1}} \left| \widetilde J(\varphi) - \widehat J_S(\varphi) \right| \right] \in O\left( \frac{1}{\sqrt{L}} + \frac{1}{\sqrt{Q}}\right),
	$$
where the expectation is over the samples $S$ as in~\eqref{fo:MC_sample}, and the constants in the big Bachmann - Landau term $O(\cdot)$ depend only on the data of the problem and on $\gamma_1$ and $\gamma_2$, but neither on $L$ nor on $Q$.
\end{theorem}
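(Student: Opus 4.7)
The plan is to control the uniform deviation via classical empirical process tools — symmetrization and Rademacher complexity bounds for shallow networks with constrained weights. I first introduce the intermediate quantity
\begin{equation*}
\widetilde J^{L}_S(\varphi) = \frac{1}{L}\sum_{\ell=1}^{L} \widetilde F\!\left(x_\ell,y_\ell,\nabla\varphi(x_\ell),e^{\varphi(x_\ell)},e^{\varphi(y_\ell)},{\textstyle\int} e^{\varphi(z)}dz\right),
\end{equation*}
and split via the triangle inequality
\begin{equation*}
\bigl|\widetilde J(\varphi)-\widehat J_S(\varphi)\bigr|
\leq
\bigl|\widetilde J(\varphi)-\widetilde J^{L}_S(\varphi)\bigr|
+\bigl|\widetilde J^{L}_S(\varphi)-\widehat J_S(\varphi)\bigr|,
\end{equation*}
i.e. the outer Monte Carlo error in $(x,y)$ and the inner Monte Carlo error from approximating $\int e^{\varphi}$ by $Q^{-1}\sum_{q}e^{\varphi(z_q)}$. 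Both terms will be controlled by Rademacher complexities of order $L^{-1/2}$ and $Q^{-1/2}$ respectively.

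\textbf{Uniform boundedness on the class.} Since $\sigma\in\cC^{1}$ is $2\pi$-periodic, both $\sigma$ and $\sigma'$ are bounded on $\RR$. For $\varphi\in \bN^{\sigma,\gamma_1,\gamma_2}_{d,n_{\mathrm{in}},1}$ we have $\varphi(x)=\sum_n w_n\sigma(\bu_n\cdot (x^\top,1)^\top)$ and $\partial_i\varphi(x)=\sum_n w_n u_{n,i}\sigma'(\bu_n\cdot (x^\top,1)^\top)$, so the constraints $\|\bw\|_1\le\gamma_1$, $\|\bu_n\|_2\le\gamma_2$ yield
\begin{equation*}
\|\varphi\|_{\cC^0}\leq \gamma_1\|\sigma\|_\infty,
\qquad
\|\partial_i\varphi\|_{\cC^0}\leq \gamma_1\gamma_2\|\sigma'\|_\infty,
\end{equation*}
uniformly in $\varphi$. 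Consequently $e^{\varphi(x)}$ and $\int e^{\varphi}$ lie in a fixed compact subinterval of $(0,\infty)$, and the map $\widetilde F$ from~\eqref{eq:ergo-def-tildeF-cost} together with its partial derivatives in the last four arguments is bounded and Lipschitz on the relevant compact domain, with constants depending only on $\gamma_1,\gamma_2$ and the data of the problem.

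\textbf{Bounding the inner error.} Lipschitz continuity of $\widetilde F$ in its last slot gives, pointwise in $\varphi$ and $S$,
\begin{equation*}
\bigl|\widetilde J^{L}_S(\varphi)-\widehat J_S(\varphi)\bigr|
\leq c\,\Bigl|{\textstyle\int} e^{\varphi(z)}dz-\tfrac{1}{Q}{\textstyle\sum_{q}}e^{\varphi(z_q)}\Bigr|.
\end{equation*}
Taking expectation and supremum over $\varphi$, standard Gin\'e--Zinn symmetrization bounds the right-hand side by $2c\,\Re_Q(\cE)$, where $\cE=\{e^{\varphi}:\varphi\in \bN^{\sigma,\gamma_1,\gamma_2}_{d,n_{\mathrm{in}},1}\}$. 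Since $e^{\cdot}$ is Lipschitz on the interval where $\varphi$ ranges, the Ledoux--Talagrand contraction principle reduces this to $\Re_Q(\bN^{\sigma,\gamma_1,\gamma_2}_{d,n_{\mathrm{in}},1})$, which by the classical Bartlett--Mendelson bound for one-hidden-layer networks with $\ell_1$-constrained output weights and $\ell_2$-constrained inner weights is of order $\gamma_1\gamma_2\|\sigma'\|_\infty/\sqrt{Q}$.

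\textbf{Bounding the outer error and conclusion.} By symmetrization the expectation of $\sup_\varphi|\widetilde J(\varphi)-\widetilde J^L_S(\varphi)|$ is bounded by $2\,\Re_L(\widetilde\cG)$, where $\widetilde\cG$ is the composed class $\{(x,y)\mapsto \widetilde F(x,y,\nabla\varphi(x),e^{\varphi(x)},e^{\varphi(y)},\int e^\varphi):\varphi\in\bN^{\sigma,\gamma_1,\gamma_2}_{d,n_{\mathrm{in}},1}\}$. The key step here is to peel off the Lipschitz outer function $\widetilde F$. Writing $\widetilde F$ as a function of the $(d+3)$-dimensional vector $(\nabla\varphi(x),e^{\varphi(x)},e^{\varphi(y)},\int e^\varphi)$ and invoking a vector-valued contraction principle (e.g., Maurer's inequality), I bound $\Re_L(\widetilde\cG)$ by a constant (depending on the Lipschitz constant of $\widetilde F$) times the sum of the Rademacher complexities of the scalar classes $\{e^\varphi\}$ and $\{\partial_i\varphi\}_{i=1,\dots,d}$. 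Crucially, since $\partial_i\varphi(x)=\sum_n(w_n u_{n,i})\sigma'(\bu_n\cdot(x^\top,1)^\top)$ with $\sum_n |w_n u_{n,i}|\le \gamma_1\gamma_2$, the class $\{\partial_i\varphi\}$ is itself a class of one-hidden-layer networks with activation $\sigma'$ and controlled weight norms, and the same Bartlett--Mendelson bound applies, giving each a complexity $O(\gamma_1\gamma_2/\sqrt{L})$. Summing the two contributions yields the announced $O(1/\sqrt{L}+1/\sqrt{Q})$ bound.

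\textbf{Main obstacle.} The principal technical hurdle is precisely the peeling step for $\nabla\varphi$: the scalar Ledoux--Talagrand contraction does not apply directly to the $(d+3)$-valued argument of $\widetilde F$, and one must either invoke a vector-valued contraction inequality or decompose $\widetilde F$ coordinate by coordinate using its partial Lipschitz constants, paying careful attention to the simultaneous dependence on $\varphi$ through $\nabla\varphi(x),e^{\varphi(x)},e^{\varphi(y)}$ and $\int e^{\varphi}$. The rest of the argument consists of bookkeeping of Lipschitz and boundedness constants, all of which depend only on $\gamma_1,\gamma_2$ and the data, exactly as asserted.
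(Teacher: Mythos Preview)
Your proposal is correct and follows essentially the same route as the paper: split into an inner $Q$-error (Lipschitz in the normalizing constant, then symmetrization and contraction) and an outer $L$-error (symmetrization, Maurer's vector-valued contraction, then Rademacher bounds for the shallow-network classes $\{\partial_i\varphi\}$ and $\{e^{\varphi}\}$). The paper carries out exactly these steps, only organizing the ghost-sample symmetrization slightly differently and writing out the Talagrand/Khintchine computations explicitly rather than citing Bartlett--Mendelson.

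One small slip to fix: $\widetilde F$ is \emph{not} a function of the $(d+3)$-dimensional vector $(\nabla\varphi(x),e^{\varphi(x)},e^{\varphi(y)},\int e^{\varphi})$ alone --- it depends directly on $(x,y)$ through $\nabla\tilde b(x)$ and $\tilde f(x,y)$ in $\check f$. Either include $(x,y)$ as additional coordinates of the vector fed into the contraction (as the paper does, yielding a $(3d+3)$-dimensional map plus a sign variable), or use the version of Maurer's inequality that allows the Lipschitz outer function to depend on the sample index $\ell$ with a common Lipschitz constant. Either way the extra coordinates contribute $O(1/\sqrt{L})$ trivially, so the conclusion is unaffected.
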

	
\begin{proof}
	First, introducing \emph{ghost} Monte Carlo samples $\tilde S = ((\tilde x_\ell, \tilde y_\ell)_\ell, (\tilde z_q)_{q})$ picked with the same distribution as $S$, and independent of the latter, we can rewrite \eqref{fo:ergodic_cost_3} as: 
	\begin{align*}
		\widetilde J(\varphi) = \EE_{\tilde S}\left[ \frac{1}{L} \sum_{\ell=1}^L \widetilde F\left(\tilde x_\ell, \tilde y_\ell, \nabla \varphi(\tilde x_\ell), e^{\varphi(\tilde x_\ell)}, e^{\varphi(\tilde y_\ell)}, \int e^{\varphi(z)}dz\right)\right],
	\end{align*}
where the expectation is over the samples $\tilde S$. Note that the variables $\tilde z_q$ do not appear in this expression. We kept them in the ghost sample for the sake of symmetry. Hence, for each fixed $S = ((x_\ell, y_\ell)_\ell, (z_q)_{q})$,
	\begin{align*}
\sup_{\varphi} \left| \widetilde J(\varphi) - \widehat J_S(\varphi) \right|
	&\leq \EE_{\tilde S} \sup_{\varphi}  \left| \frac{1}{L} \sum_{\ell=1}^L \left[ \widetilde F\left(\tilde x_\ell, \tilde y_\ell, \nabla \varphi(\tilde x_\ell), e^{\varphi(\tilde x_\ell)}, e^{\varphi(\tilde y_\ell)}, \int e^{\varphi(z)}dz\right) \right.\right.
	\\
	& \qquad\qquad 
	\left.\left. - \widetilde F\left(x_\ell, y_\ell, \nabla \varphi(x_\ell), e^{\varphi(x_\ell)}, e^{\varphi(y_\ell)}, \frac{1}{Q} \sum_{q=1}^Q e^{\varphi(z_q)} \right)\right]  \right|.
	\end{align*}
Taking  expectation over $S$ we get:
	
\begin{align}
\EE_S \sup_{\varphi} \left| \widetilde J(\varphi)   - \widehat J_S(\varphi) \right|
	&\leq \EE_{S,\tilde S} \sup_{\varphi}  \Bigl | \frac{1}{L} \sum_{\ell=1}^L  \Bigl[ \widetilde F\Bigl(\tilde x_\ell, \tilde y_\ell, \nabla \varphi(\tilde x_\ell), e^{\varphi(\tilde x_\ell)}, e^{\varphi(\tilde y_\ell)}, \int e^{\varphi(z)}dz\Bigr) 
	\notag
	\\
	& \qquad\qquad\qquad 
	- \widetilde F\Bigl(x_\ell, y_\ell, \nabla \varphi(x_\ell), e^{\varphi(x_\ell)}, e^{\varphi(y_\ell)},  \frac{1}{Q} \sum_{q=1}^Q e^{\varphi(z_q)} \Bigr)\Bigr]  \Bigr|
	\notag
	\\
&\leq \EE_{S,\tilde S} \sup_{\varphi}  \Bigl | \frac{1}{L} \sum_{\ell=1}^L  \Bigl[ \widetilde F\Bigl(\tilde x_\ell, \tilde y_\ell, \nabla \varphi(\tilde x_\ell), e^{\varphi(\tilde x_\ell)}, e^{\varphi(\tilde y_\ell)}, \int e^{\varphi(z)}dz\Bigr) 
	\notag
	\\
	& \qquad\qquad\qquad 
	- \widetilde F\Bigl(\tilde x_\ell, \tilde y_\ell, \nabla \varphi(\tilde x_\ell), e^{\varphi(\tilde x_\ell)}, e^{\varphi(\tilde y_\ell)},  \frac{1}{Q} \sum_{q=1}^Q e^{\varphi(\tilde z_q)} \Bigr)\Bigr]  \Bigr|
	\notag
	\\
	& \qquad\qquad
	+ \EE_{S,\tilde S} \sup_{\varphi}  \Bigl | \frac{1}{L} \sum_{\ell=1}^L  \Bigl[ \widetilde F\Bigl(\tilde x_\ell, \tilde y_\ell, \nabla \varphi(\tilde x_\ell), e^{\varphi(\tilde x_\ell)}, e^{\varphi(\tilde y_\ell)}, \frac{1}{Q} \sum_{q=1}^Q e^{\varphi(\tilde z_q)}\Bigr) 
	\notag
	\\
	& \qquad\qquad\qquad 
	- \widetilde F\Bigl(x_\ell, y_\ell, \nabla \varphi(x_\ell), e^{\varphi(x_\ell)}, e^{\varphi(y_\ell)},  \frac{1}{Q} \sum_{q=1}^Q e^{\varphi(z_q)} \Bigr)\Bigr]  \Bigr|
	\notag\\
&= (i) + (ii)
\label{fo:i+ii}
\end{align}
and we analyze separately the contributions of the two double expectations to the value of the above right hand side.
By definition of $\bN^{\sigma,\gamma_1,\gamma_2}_{d, n_{\mathrm{in}}, 1}$, there exists a constant $C'>0$ such that for every $\varphi\in\bN^{\sigma,\gamma_1,\gamma_2}_{d, n_{\mathrm{in}}, 1}$ and every sample $x,y, \tilde z_1, \dots, \tilde z_Q \in \TT^d$, we have:
$$
	\bigl(  x, y, \nabla \varphi(x), e^{\varphi(x)}, e^{\varphi(y)}\bigr) \in [-C',C']^{3d+2},
	\qquad  \int_{\TT^d} e^{\varphi(z)}dz\in  [\frac{1}{C'},C'], \qquad\frac{1}{Q} \sum_{q=1}^Q e^{\varphi(\tilde z_q)}\in  [\frac{1}{C'},C'],
$$
and given the assumptions on $\tilde f$ and the definitions of $\widetilde F$ and  $\bN^{\sigma,\gamma_1,\gamma_2}_{d, n_{\mathrm{in}}, 1}$, one can find a constant $C>0$ such that:
$$
|\widetilde F(x,y,q,X,Y,Z) - \widetilde F(x,y,q,X,Y,Z')|\le C|Z-Z'|
$$
for all $(x,y,q,X,Y)\in[-C',C']^{3d+2}$ and $Z$ and $Z'$ in $[1/C',C']$.
Notice that the constants $C$ and $C'$ depend upon $\sigma(\cdot)$, $\gamma_1$, $\gamma_2$, $d$, and $n_{\textrm{in}}$, but not on the particular $\varphi \in  \bN^{\sigma,\gamma_1,\gamma_2}_{d, n_{\mathrm{in}}, 1}$.
Using this Lipschitz bound, we get:
$$
(i)\le \EE_{\tilde S}\sup_\varphi\Bigl|\int e^{\varphi(z)}dz -\frac{1}{Q} \sum_{q=1}^Q e^{\varphi(\tilde z_q)} \Bigr|.
$$
To bound from above the right hand side, we follow a pretty standard strategy. First we notice that:
$$
\int e^{\varphi(z)}dz =\frac{1}{Q} \sum_{q=1}^Q\EE_S e^{\varphi( z_q)}.
$$
Moreover, we can introduce a family $\br=(r_q)_{q=1,\cdots,Q}$ of independent  Rademacher random variables (i.e. satisfying $\PP[ r_q=-1]=\PP[ r_q=1]=1/2$), independent of the samples $S$ and $\tilde S$. Since the samples $(z_q)_{q=1,\cdots,Q}$ and $(\tilde z_q)_{q=1,\cdots,Q}$ are independent and identically distributed, we have
\begin{align}
(i)&\le \EE_S\EE_{\tilde S}\sup_\varphi \Bigl|\frac{1}{Q} \sum_{q=1}^Q\Bigl[ e^{\varphi( z_q)}- e^{\varphi( \tilde z_q)}\Bigr]\Bigr|\notag\\
&= \EE_S\EE_{\tilde S}\EE_{\br}\sup_\varphi \Bigl|\frac{1}{Q} \sum_{q=1}^Q r_q\Bigl[ e^{\varphi( z_q)}- e^{\varphi( \tilde z_q)}\Bigr]\Bigr|\notag\\
&\le \EE_S\EE_{\tilde S}\EE_{\br}\sup_\varphi \Bigl|\frac{1}{Q} \sum_{q=1}^Q r_q e^{\varphi( z_q)}\Bigr|
+ \EE_S\EE_{\tilde S}\EE_{\br} \Bigl|\frac{1}{Q} \sum_{q=1}^Q r_qe^{\varphi( z_q)}\Bigr|\notag\\
&= 2\EE_S\EE_{\br}\sup_\varphi \Bigl|\frac{1}{Q} \sum_{q=1}^Q r_qe^{\varphi( z_q)}\Bigr|\notag\\
&\le C' \frac{1}{\sqrt{Q}},
\label{fo:i}
\end{align}
where we used Khintchine inequality to derive the last inequality.

\vskip 4pt
We now turn our attention to the estimation of the term $(ii)$ in \eqref{fo:i+ii}. Because of the introduction of the ghost samples, for each $\ell \in \{1,\cdots,L\}$, the two terms we compute the difference of are independent and identically distributed, so we can rewrite $(ii)$ using a family $\tau=(\tau_\ell)_{\ell=1,\cdots,L}$ of independent  Rademacher random variables independent of the samples $S$ and $\tilde S$ in the following way: 
\begin{align}
(ii)	&= \EE_{S,\tilde S} \EE_\tau \sup_{\varphi}  \Bigl | \frac{1}{L} \sum_{\ell=1}^L \tau_\ell\Bigl[ \widetilde F\Bigl(\tilde x_\ell, \tilde y_\ell, \nabla \varphi(\tilde x_\ell), e^{\varphi(\tilde x_\ell)}, e^{\varphi(\tilde y_\ell)}, \frac{1}{Q} \sum_{q=1}^Q e^{\varphi(\tilde z_q)}\Bigr) 
	\notag
	\\
	& \qquad\qquad\qquad 
	- \widetilde F\Bigl(x_\ell, y_\ell, \nabla \varphi(x_\ell), e^{\varphi(x_\ell)}, e^{\varphi(y_\ell)},  \frac{1}{Q} \sum_{q=1}^Q e^{\varphi(z_q)} \Bigr)\Bigr]  \Bigr|
	\notag\\
	&\leq \EE_{S,\tilde S} \EE_\tau \Bigl[ \sup_{\varphi}  \Bigl | \frac{1}{L} \sum_{\ell=1}^L \tau_\ell \widetilde F\Bigl(\tilde x_\ell, \tilde y_\ell, \nabla \varphi(\tilde x_\ell), e^{\varphi(\tilde x_\ell)}, e^{\varphi(\tilde y_\ell)},\frac{1}{Q} \sum_{q=1}^Q e^{\varphi(\tilde z_q)}\Bigr)  \Bigr|
	\notag
	\\
	& \qquad\qquad 
	+  \sup_{\varphi}  \Bigl | \frac{1}{L} \sum_{\ell=1}^L \tau_\ell \widetilde F\Bigl(x_\ell, y_\ell, \nabla \varphi(x_\ell), e^{\varphi(x_\ell)}, e^{\varphi(y_\ell)}, \frac{1}{Q} \sum_{q=1}^Q e^{\varphi(z_q)}  \Bigr)  \Bigr| \Bigr]
	\notag
	\\
	&=  2  \EE_S \EE_\tau \sup_{\varphi}  \Bigl | \frac{1}{L} \sum_{\ell=1}^L \tau_\ell \widetilde F\Bigl(  x_\ell,   y_\ell, \nabla \varphi(  x_\ell), e^{\varphi(  x_\ell)}, e^{\varphi(  y_\ell)}, \frac{1}{Q} \sum_{q=1}^Q e^{\varphi(z_q)} \Bigr)\Bigr|
	\label{fo:ii}
	\end{align}
where we used the fact that $S$ and $\tilde S$ are i.i.d. 
For each fixed $\varphi \in \bN^{\sigma,\gamma_1,\gamma_2}_{d, n_{\mathrm{in}}, 1}$ and samples $S$ and $\tau$ we have
	\begin{align*}
		&\left| \sum_{\ell=1}^L \tau_\ell \widetilde F\left(  x_\ell,   y_\ell, \nabla \varphi(  x_\ell), e^{\varphi(  x_\ell)}, e^{\varphi(  y_\ell)},  \frac{1}{Q} \sum_{q=1}^Q e^{\varphi(z_q)} \right) \right|
		\\
		&\hskip 35pt\leq 
		\left| \sum_{\ell=1}^L \tau_\ell \widetilde F\left(  0,  0, 0, 0, 0, 1\right) \right|
		\\
		& \hskip 55pt+ \left| \sum_{\ell=1}^L \tau_\ell \left[\widetilde F\left(  x_\ell,   y_\ell, \nabla \varphi(  x_\ell), e^{\varphi(  x_\ell)}, e^{\varphi(  y_\ell)}, \frac{1}{Q} \sum_{q=1}^Q e^{\varphi(z_q)} \right) - \widetilde F\left(  0, 0,0, 0, 0, 1 \right) \right] \right|.
	\end{align*}
Since $\widetilde F\left(  0,  0, 0, 0, 0, 1 \right)$ is a constant independent of $\varphi \in \bN^{\sigma,\gamma_1,\gamma_2}_{d, n_{\mathrm{in}}, 1}$, we denote it momentarily by $C''$ to ease the notation. Moreover, for each fixed sample $S = (x_\ell,   y_\ell, z_q)_{\ell,q}$ we have:	
	\begin{equation}
	\label{eq:proof-estimerr-FF0-tilde-z}
	\begin{split}
		&\EE_{\tau} \sup_{\varphi \in \bN^{\sigma,\gamma_1,\gamma_2}_{d, n_{\mathrm{in}}, 1}} \left| \sum_{\ell=1}^L \tau_\ell \left[\widetilde F\left(  x_\ell,   y_\ell, \nabla \varphi(  x_\ell), e^{\varphi(  x_\ell)}, e^{\varphi(  y_\ell)}, \frac{1}{Q} \sum_{q=1}^Q e^{\varphi(z_q)} \right) - C'' \right] \right|
		\\
		&\le \EE_{\tau} \sup_{\varphi \in \bN^{\sigma,\gamma_1,\gamma_2}_{d, n_{\mathrm{in}}, 1}} \sup_{s \in \{-1,+1\}} s \sum_{\ell=1}^L \tau_\ell \left[\widetilde F\left(  x_\ell,   y_\ell, \nabla \varphi(  x_\ell), e^{\varphi(  x_\ell)}, e^{\varphi(  y_\ell)}, \frac{1}{Q} \sum_{q=1}^Q e^{\varphi(z_q)} \right) - C''\right]
	\end{split}
	\end{equation}
where the variable $s$ is introduced to replace the absolute value.
For each $\varphi \in  \bN^{\sigma,\gamma_1,\gamma_2}_{d, n_{\mathrm{in}}, 1}$ and $s \in \{-1,+1\}$,  we define the function $\psi^s_\varphi$ for  $(x,y,Z)\in \TT^{d}\times\TT^{d}\times (\TT^{d})^Q$ by:
$$
	\psi^s_\varphi(x,y,(z_q)_{q=1,\dots,Q}) = \Bigl(  x, y, \nabla \varphi(x), e^{\varphi(x)}, e^{\varphi(y)}, \frac{1}{Q} \sum_{q=1}^Q e^{\varphi(z_q)}, s\Bigr). 
$$
By definition of $\bN^{\sigma,\gamma_1,\gamma_2}_{d, n_{\mathrm{in}}, 1}$, it is clear that the range of the map $\psi^s_\varphi$ is contained in a hypercube of the form: 
$$
	\cD^{\sigma, \gamma_1,\gamma_2}_{d, n_{\mathrm{in}}}=[-C',C']^{3d+2}\times [\frac{1}{C'},C']\times[-1,1]\subset \RR^{3d+4}
$$ 
where the constant $C'=C'(\sigma, \gamma_1,\gamma_2,d, n_{\textrm{in}})$ was introduced earlier.
Given the assumptions on $\tilde f$ and the definitions of $\widetilde F$ and  $\bN^{\sigma,\gamma_1,\gamma_2}_{d, n_{\mathrm{in}}, 1}$, one can construct a real valued function $ \Phi$ on $\RR^{3d+4}$ which is Lipschitz continuous over the whole space and satisfies:
$$
	 \Phi(x, y, q, X, Y, Z, s) = s \left[\widetilde F(x,y,q,X,Y,Z) - \widetilde F(0,0,0,0,0,1)\right], 
$$
for $(x, y, q, X, Y, Z, s)\in\cD^{\sigma, \gamma_1,\gamma_2}_{d, n_{\mathrm{in}}}$, and whose Lipschitz constant, say $K$, depends upon $\sigma(\cdot)$, $\gamma_1$, $\gamma_2$, $d$, $n_{\textrm{in}}$ and $C''$, but not on the particular $\varphi \in  \bN^{\sigma,\gamma_1,\gamma_2}_{d, n_{\mathrm{in}}, 1}$.
Next, we introduce the set of functions:
$$
	 \cF^{\sigma, \gamma_1,\gamma_2}_{d, n_{\mathrm{in}}} = \left\{  \psi : \TT^{d}\times\TT^d\times(\TT^d)^Q \to \RR^{3d+4}; \; \exists \varphi \in \bN^{\sigma,\gamma_1,\gamma_2}_{d, n_{\mathrm{in}}, 1},  \exists s \in \{-1,+1\},  \psi =  \psi^s_\varphi \right\}.
$$
We can then rewrite the right hand side of~\eqref{eq:proof-estimerr-FF0-tilde-z} as:
\begin{align*}
	\EE_{\tau} \sup_{ \psi \in \cF^{\sigma, \gamma_1,\gamma_2}_{d, n_{\mathrm{in}}}} \sum_{\ell=1}^L \tau_\ell \left[ \Phi \left( \psi \left(x_\ell,   y_\ell, (z_q)_q \right) \right) \right]
\end{align*}
By the form of Talagrand's contraction lemma given in Corollary 4 of \cite{Maurer},  this quantity is bounded from above by:
$$
\sqrt{2} K \EE_{\tilde\tau} \sup_{\psi \in \cF^{\sigma, \gamma_1,\gamma_2}_{d, n_{\mathrm{in}}}} \sum_{\ell=1}^L\sum_{k=1}^{3d+4} \tilde\tau_{\ell,k} \,  \psi \left(x_\ell,   y_\ell, (z_q)_{q=1,\dots,Q} \right)_k 
$$
where, for $\bz=(z_q)_{q=1,\dots,Q} \in (\TT^d)^Q$, $ \psi (x, y, \bz)_k$ denotes the $k$-th component of the vector $ \psi (x, y, \bz)$, and where the family of random variables $\tilde\tau=(\tilde\tau_{\ell,k})_{\ell=1,\cdots,L,\;k=1,\cdots,3d+4}$ is an independent Rademacher family with one extra index. Accordingly, this quantity is bounded from above by:
$$ 
\sqrt{2} K \sum_{k=1}^{3d+4}\EE_{\tilde\tau} \sup_{\psi \in \cF^{\sigma, \gamma_1,\gamma_2}_{d, n_{\mathrm{in}}}} \sum_{\ell=1}^L \tilde\tau_{\ell,k} \, \psi \left(x_\ell,   y_\ell, \bz \right)_k ,
$$
and we proceed to estimate the $3d+4$ terms of the outer sum one by one.
Notice that for $k \le 3d+2$, the term
$ \psi \left(x_\ell,   y_\ell, \bz \right)_k$
does not depend upon $\bz$, and we proceed in the following way.
The terms corresponding to $k=1,\dots,d$  (resp. $k=d+1,\dots,2d$) are easy to control since $\psi (x_\ell,   y_\ell,\bz)_k=(x_\ell)_k$ (resp. $\psi (x_\ell,   y_\ell,\bz)_k=(y_\ell)_{k-d}$) do not depend upon $\varphi$ or $s$, rendering the supremum irrelevant. Moreover, since the norms of $x_\ell $ and $y_\ell$ in $\RR^d$ are bounded by $C'$, Khintchine inequality gives:
$$
	\EE_{\tilde\tau} \sum_{\ell=1}^L \tilde\tau_{\ell,1} \, \|x_\ell\|
	\leq
	C \sqrt{L},
	\quad\text{and}\quad
\EE_{\tilde\tau} \sum_{\ell=1}^L \tilde\tau_{\ell,2} \, \|y_\ell\|
	\leq
	C \sqrt{L}.
$$
For $k=2d+h$ with  $h \in \{1,\dots,d\}$, $\psi (x_\ell,   y_\ell,\bz)_k=\partial_h\varphi(x_\ell)=\sum_{n=1}^{n_{in}} w_n u_{n,h} \sigma'( \bu_n \cdot  (x_\ell^\top, 1)^\top)$, and: 
\begin{align}
\label{fo:est}
	\sum_{\ell=1}^L\tilde\tau_{\ell,k} \, \Bigl[ \sum_{n=1}^{n_{in}} w_n u_{n,h} \sigma'( \bu_n \cdot  (x_\ell^\top, 1)^\top)\Bigr]
	&\leq
	\sum_{n=1}^{n_{in}} |w_n u_{n,h}| \Bigl|  \sum_{\ell=1}^L\tilde\tau_{\ell,k} \, \left[ \sigma'( \bu_n \cdot  (x_\ell^\top, 1)^\top)\right] \Bigr| \nonumber
	\\
	&\leq
	\Bigl(\sum_{n=1}^{n_{in}} |w_n u_{n,h}| \Bigr) \sup_{n}\Bigl|  \sum_{\ell=1}^L\tilde\tau_{\ell,k} \, \left[ \sigma'( \bu_n \cdot  (x_\ell^\top, 1)^\top)\right] \Bigr| \nonumber
	\\
	&\leq
	\gamma_1\gamma_2 \sup_{n} \Bigl|  \sum_{\ell=1}^L\tilde\tau_{\ell,k} \, \left[ \sigma'( \bu_n \cdot  (x_\ell^\top, 1)^\top)\right] \Bigr|
\end{align}
because of the definition of $\varphi \in  \bN^{\sigma,\gamma_1,\gamma_2}_{d, n_{\mathrm{in}}, 1}$. Consequently, since the above quantity does not depend upon $s$, the supremum over $\psi \in \cF^{\sigma, \gamma_1,\gamma_2}_{d, n_{\mathrm{in}}}$ can be taken over $\varphi \in \bN^{\sigma,\gamma_1,\gamma_2}_{d, n_{\mathrm{in}}, 1}$ and we have:
\begin{align*}
\EE_{\tilde\tau} \sup_{\psi \in \cF^{\sigma, \gamma_1,\gamma_2}_{d, n_{\mathrm{in}}}} \sum_{\ell=1}^L \tilde\tau_{\ell,k} \, \psi (x_\ell,   y_\ell,\bz)_k
	&=\EE_{\tilde\tau} \sup_{\varphi \in \bN^{\sigma,\gamma_1,\gamma_2}_{d, n_{\mathrm{in}}, 1}} \sum_{\ell=1}^L \tilde\tau_{\ell,k} \, \partial_h \varphi(x_\ell) 
	\\
	&=\EE_{\tilde\tau} \sup_{\varphi \in \bN^{\sigma,\gamma_1,\gamma_2}_{d, n_{\mathrm{in}}, 1}} \sum_{\ell=1}^L\tilde\tau_{\ell,k} \, \Bigl[ \sum_{n=1}^{n_{in}} w_n u_{n,h} \sigma'( \bu_n \cdot  (x_\ell^\top, 1)^\top)\Bigr]\\
	&\leq\gamma_1\gamma_2\EE_{\tilde\tau} \sup_{\bu_n \,:\, \|\bu_n\|_2 \leq \gamma_2} \, \Bigl|\sum_{\ell=1}^L \tilde\tau_{\ell,k} \,  \sigma'( \bu_n \cdot  (x_\ell^\top, 1)^\top)\Bigr|
\end{align*}
where we used \eqref{fo:est}. Since the derivative of the activation function $\sigma$ is Lipschitz (without any loss of generality we use the same constant $K>0$ for its Lipschitz constant), we can use the original version of Talagrand's contraction lemma to estimate the above right hand side. From \cite[Theorem 4.12]{MR1102015} with $F(x)=x$, $\varphi_i(t)=\sigma'(t)-\sigma'(0)$, and $T=\{(\bu\cdot(x_\ell^\top,1)^\top)_{\ell=1,\cdots,L};\;\|\bu\|_2\le\gamma_2\}$ we get:
\begin{align*}
	\EE_{\tilde\tau} \sup_{\psi \in \cF^{\sigma, \gamma_1,\gamma_2}_{d, n_{\mathrm{in}}}} \sum_{\ell=1}^L \tilde\tau_{\ell,k} \, \psi (x_\ell,   y_\ell,\bz)_k 
	&\leq C 
	\EE_{\tilde\tau} \sup_{\|\bu_n\|_2 \leq \gamma_2} \, \Bigl|\sum_{\ell=1}^L \tilde\tau_{\ell,k} [  \sigma'( \bu_n \cdot  (x_\ell^\top, 1)^\top)-\sigma'(0)]\Bigr|
	+C\sigma'(0)\EE_{\tilde\tau}  \, \Bigl|\sum_{\ell=1}^L \tilde\tau_{\ell,k} \Bigr|
	\\
	&\leq C \left(\EE_{\tilde\tau} \sup_{\|\bu\|_2 \leq \gamma_2} \, \Bigl| \sum_{\ell=1}^L \tilde\tau_{\ell,k} \, \left[  \bu \cdot (x_\ell^\top, 1)^\top \right] \Bigr|
	+\EE_{\tilde\tau} \, \Bigl|\sum_{\ell=1}^L \tilde\tau_{\ell,k} \Bigr|\right)
	\\
	&= C \left(\EE_{\tilde\tau} \sup_{\|\bu\|_2 \leq \gamma_2} \, \Bigl| \bu\cdot \sum_{\ell=1}^L\tilde\tau_{\ell,k} \, \left[   (x_\ell^\top, 1)^\top \right] \Bigr|
	+\EE_{\tilde\tau}  \, \Bigl|\sum_{\ell=1}^L \tilde\tau_{\ell,k} \Bigr|\right)
	\\
	&\le C \left(\EE_{\tilde\tau} \sup_{ \|\bu\|_2 \leq \gamma_2} \, \| \bu \|_2 \Bigl\| \sum_{\ell=1}^L\tilde\tau_{\ell,k} \, \left[   (x_\ell^\top, 1)^\top \right] \Bigr\|_2
	+\EE_{\tilde\tau} \, \Bigl|\sum_{\ell=1}^L \tilde\tau_{\ell,k} \Bigr|\right)
	\\
	&\leq C \left( \EE_{\tilde\tau}  \, \Bigl\| \sum_{\ell=1}^L\tilde\tau_{\ell,k} \, \left[   (x_\ell^\top, 1)^\top \right] \Bigr\|_2
	+\EE_{\tilde\tau}  \, \Bigl|\sum_{\ell=1}^L \tilde\tau_{\ell,k} \Bigr|\right)
	\\
	&\leq C \sqrt{L},
\end{align*}
where the value of the constant $C>0$ changed from line to line, and where we used Cauchy-Schwarz and Khintchine inequalities.

We proceed similarly for the values $k=3d+1$ and $k=3d+2$ since the exponential function is Lipschitz on the range $[-C',C']$ of the functions $\varphi \in \bN^{\sigma,\gamma_1,\gamma_2}_{d, n_{\mathrm{in}}, 1}$.

\vskip 2pt
We now focus on the penultimate term. For $k = 3d+3$, the term $\psi (x_\ell,   y_\ell,\bz)_k$ is $\frac{1}{Q} \sum_{q=1}^Q e^{\varphi(z_q)}$. It does not depend upon $s$ so the supremum over $\psi \in \cF^{\sigma, \gamma_1,\gamma_2}_{d, n_{\mathrm{in}}}$ can be taken over $\varphi \in \bN^{\sigma,\gamma_1,\gamma_2}_{d, n_{\mathrm{in}}}$ and we have:
\begin{align*}	
	\EE_{\tilde\tau} \sup_{ \psi \in \cF^{\sigma, \gamma_1,\gamma_2}_{d, n_{\mathrm{in}}}} \sum_{\ell=1}^L \tilde\tau_{\ell,k} \,  \psi (x_\ell,   y_\ell, \bz)_k
	&=\EE_{\tilde\tau} \sup_{\varphi \in \bN^{\sigma,\gamma_1,\gamma_2}_{d, n_{\mathrm{in}}, 1}} \sum_{\ell=1}^L \tilde\tau_{\ell,k} \, \frac{1}{Q} \sum_{q=1}^Q e^{\varphi(z_q)}
	\\
	&\leq
	C'\EE_{\tilde\tau}  \Bigl|\sum_{\ell=1}^L \tilde\tau_{\ell,k}  \Bigr|
	\\
	&\leq 
	C \sqrt{L} 
\end{align*}
because of Khintchine inequality.
Finally, the term corresponding to $k=3d+4$ can easily be bounded in the same way since $ \psi (x_\ell,   y_\ell, \bz)_{3d+4} = s\in\{-1,+1\}$.

This concludes the analysis of $(ii)$, proving that it is bounded from above by a constant times $1/\sqrt{L}$. Combining this with \eqref{fo:i} and \eqref{fo:i+ii}, the proof is complete.
\end{proof}

\section{Application of the Deep Galerkin Method}
\label{sec:DGM}
An alternative way to solve the ergodic mean field control problem is to tackle directly the PDE system~\eqref{fo:optimality-H}. In order to do so, we adapt the Deep Galerkin Method (DGM) proposed by Sirignano and Spiliopoulos~\cite{MR3874585} for a single PDE. The key idea is to rewrite the PDE system as a new minimization problem where the control is the triple $(\nu, p, \lambda)$ and the loss function is the sum of the PDE residuals (plus some terms taking into account the boundary conditions and the normalization conditions). In our setting, this idea can be implemented as follows. To alleviate the notations, we introduce the sets $C_{i} = \{x_i = 0\}$ where $i \in \{1,\dots,d\}$ and use the shorthand notation $(x_{-i}, 2\pi)$  for $(x_1,\dots,x_{i-1},2\pi,x_{i+1}, \dots, x_d)$ and we set
\begin{equation}
\label{eq:loss-DGM-total}
	L(\nu,p,\lambda) = L^{(1)}(\nu,p,\lambda) + L^{(2)}(\nu,p,\lambda)
\end{equation}
where
\begin{equation}
\label{eq:loss-DGM-1}
\begin{split}
	 L^{(1)}(\nu,p,\lambda)
	 &= \left\| \frac12 \Delta \nu + \mathrm{div} \bigl( \partial_{y} H^\star(\cdot, \nu, \nabla p(\cdot)) \nu \bigr) \right\|_{L^2(\TT^d)}
	 \\
	 & \qquad + \sum_{i=1}^d \left(\int_{C_i} \left| \nu(x) - \nu((x_{-i},2\pi)) \right|^2 dx\right)^{1/2}
	  + \left| 1 - \int_{\TT^d} \nu(x) dx \right|
\end{split}
\end{equation}
and 
\begin{equation}
\label{eq:loss-DGM-2}
\begin{split}
	 L^{(2)}(\nu,p,\lambda)
	&= \left\| \lambda + \frac12\Delta p - H^\star(\cdot, \nu, \nabla p(\cdot)) - \int D_\mu H^\star(\xi, \nu, \nabla p(\xi))(\cdot)\, \nu(d\xi) \right\|_{L^2(\TT^d)}
	 \\
	 & \qquad + \sum_{i=1}^d \left(\int_{C_i} \left| p(x) - p((x_{-i},2\pi)) \right|^2 dx\right)^{1/2}
	  + \left| \int_{\TT^d} p(x) dx \right|.
\end{split}
\end{equation}
Each function encodes one of the two PDEs of the optimality system~\eqref{fo:optimality-H} and contains one term for the residual of the PDE, one term for the periodicity condition, and one term for the normalization condition. These terms can be weighted to adjust their relative importance. In any case, note that $L(\nu,p,\lambda) = 0$ if $(\nu,p,\lambda)$ solves the PDE system~\eqref{fo:optimality-H}. 
Since our primary motivation is the optimal control of MKV dynamics, we present the method in this setting. However the same ideas can be readily applied to other PDE systems by designing differently the loss function. For instance,  to solve the PDE system arising in the corresponding stationary MFG, one simply needs to remove the term $\int D_\mu H^\star(\xi, \nu, \nabla p(\xi))(\cdot)\, \nu(d\xi)$ in~\eqref{eq:loss-DGM-2}. For the sake of illustration, we present several examples below in the next section.

\vskip 6pt
We then look for $\nu$ and $p$ in the form of neural networks, say $\nu_{\theta_1}$ and $p_{\theta_2}$ with fixed architectures and parameterized by $\theta_1$ and $\theta_2$ respectively. The unknown $\lambda$ is replaced by a variable coefficient $\theta_3 \in \RR$ which is learnt along the way. As in the method discussed in the previous sections, the integrals are interpreted as expectations with respect to a random variable with uniform distribution over $\TT^d$, and one uses SGD to minimize the total loss function. More precisely, for a given $\mathbf{S} = (S, (S_i)_{i \in \{1,\dots,d\}})$ where $S \subset [0,1]$ is a  finite set of points and $S_{i}$ is a finite set of points in $C_{i}$ for every $i \in \{1,\dots,d\}$, we define the empirical loss function as follows: for $\theta = (\theta_1,\theta_2,\theta_3)$,
\begin{equation}
\label{eq:loss-DGM-system}
	L_{\mathbf{S}}(\theta) = 
	L_{\mathbf{S}}^{(1)}(\theta) + L_{\mathbf{S}}^{(2)}(\theta)
\end{equation}
where
\begin{align*}
	L_{\mathbf{S}}^{(1)}(\theta) 
	= \, & \left(\sum_{x \in S} \left | \frac12 \Delta \nu_{\theta_1}(x) + \mathrm{div} \bigl( \partial_{y} H^\star(\cdot, \nu_{\theta_1}, \nabla p_{\theta_2}(\cdot)) \nu_{\theta_1} \bigr)(x) \right |^2\right)^{1/2}
	\\
	&\qquad + \sum_{i=1}^d \left(\sum_{x \in S_i} \left | \nu_{\theta_1}(x) - \nu_{\theta_1}((x_{-i},2\pi))\right |^2\right)^{1/2}
	+ \left| 1 - \sum_{x \in S} \nu_{\theta_1}(x) \right|
\end{align*}
and
\begin{align*}
	L_{\mathbf{S}}^{(2)}(\theta) 
	= \, & \left(\sum_{x \in S} \left | \theta_3 + \frac12\Delta p_{\theta_2}(x) - H^\star(x, \nu_{\theta_1}, \nabla p_{\theta_2}(x)) - \int D_\mu H^\star(\xi,\nu_{\theta_1}, \nabla p_{\theta_2}(\xi))(x)\, \nu_{\theta_1}(d\xi) \right |^2 \right)^{1/2}
	\\
	&\qquad + \sum_{i=1}^d \left( \sum_{x \in S_i} \left | p_{\theta_2}(x) - p_{\theta_2}((x_{-i},2\pi))\right |^2 \right)^{1/2}
	+ \left| \sum_{x \in S} p_{\theta_2}(x)\right|.
\end{align*}

One can use SGD to minimize the loss function~\eqref{eq:loss-DGM-system}. The approximation power of this method has been discussed in~\cite{MR3874585} using a universal approximation theorem. However, this type of results does not give any rate of convergence. More precise convergence results could be obtained by the techniques presented in Section~\ref{sub:periodic_numerics}. In particular, the approximation error can be bounded by combining again Theorems 2.3 and 6.1 in~\cite{MhaskarMicchelli}. For instance, if the PDE system~\eqref{fo:optimality-H} has a solution $(\nu, p, \lambda)$ such that $p, \nu \in \mathcal C^{3}(\TT^d)$, then there exist neural networks $\varphi_p, \varphi_\nu \in \bN^\sigma_{d, n_{\mathrm{in}}, 1}$ such that $\|p - \varphi_p\|_{\cC^2(\TT^d)}$ and $\|\nu - \varphi_\nu\|_{\cC^2(\TT^d)}$ are in $O\left(n_{\mathrm{in}}^{-2/(3d)}\right)$. In turn, this property leads to bounds on both the loss function of the algorithm and the error on the value function of the control problem.
The detailed analysis is left for future work.

\vskip 6pt
An important advantage of the DGM method is its flexibility and its generality since it can, a priori, be applied to almost any PDE. However, this generality can also be a drawback. Indeed the method does not exploit the structure of the PDE or in our case, of the PDE system under consideration. In generalizing this method to the case of our system, our main challenge was the choice of the relative weights to be assigned to the various terms in the loss function. These coefficients can be used to give more or less importance to some aspects of the solution. For instance, a large weight for the penalization terms ensures that the boundary and normalization conditions are likely to be satisfied at convergence. Also, if the gradient of one of the terms is too small, putting a larger weight can help the gradient descent to make faster progress. But the weights are hyperparameters that need to be tuned in accordance with other hyperparameters. Indeed, if they are not chosen appropriately, the stochastic gradient descent can easily be stuck in local minima. For instance if the weight of the normalization condition is not sufficiently large, the algorithm goes quickly towards a configuration which completely ignores this constraint and stays stuck there (although it should reduce the loss to try to satisfy this constraint). On the other hand, if the weight on the normalization condition is too large, it obfuscates the role of the other terms. Similarly, giving too much weight to one of the two PDEs prevents the neural networks from solving accurately the system.  
We had to find a good balance between the weights empirically. Furthermore, we found that it is sometimes helpful to adjust them dynamically during training to guide the neural network towards a satisfactory approximation of the solution. Although we are convinced that it can improve the results, we refrained from using this technique in the numerical examples presented below because of its \textit{ad hoc} nature.  In most machine learning applications, characterizing optimal combinations of hyperparameters is a very challenging task. In our case, finding optimal choices of weights and learning rates is an interesting question beyond the scope of the present work. Even without an optimal choice of parameters, we found that from a numerical standpoint, a good choice can be made by monitoring convergence to zero of each term of the loss function (the residuals and the penalty terms).

\section{Numerical Results}
\label{sec:num-res}

In this section we present numerical results obtained using implementations of the methods described in the previous sections. Algorithm 1 refers to the method based on minimization of the cost functional introduced in Section~\ref{sub:periodic_numerics}. Algorithm 2 refers to the DGM method described in Section~\ref{sec:DGM}. The implementations have been done in \texttt{TensorFlow}. The details of the neural network architectures are specified below. We used Adam for the gradient-based optimization. The results are presented on the unit torus (i.e., $[0,1]^d$ with periodic boundary conditions) instead of $\TT^d = [0,2\pi]^d$ as in the text.

\subsection{Examples in Dimension 1}

For ease of visualization, we start with univariate examples. We first consider models without explicit solutions, and we compare the solutions computed by the two algorithms introduced earlier with a benchmark solution computed by a deterministic method based on a finite difference scheme for the PDE system~\cite{MR2679575}. For these test cases, we used feed-forward neural networks with $3$ hidden layers, each layer having $20$ units.

\vskip 6pt
\noindent\textbf{Test case 1:} For the sake of illustration we include a model without mean field interaction, say
$$
	b(x, \mu, \phi) = \phi, 
	\qquad 
	f(x, \mu, \phi) = \frac{1}{2} |\phi|^2  +  \tilde f(x),
$$
with
\begin{equation}
\label{eq:ex-nonexpli-tilde-f-coscossin}
	\tilde f(x)  = 50 \Big(0.1\cos(2 \pi x) + \cos(4 \pi x) + 0.1 \sin\left(2\pi \left(x-\tfrac{\pi}{8}\right)\right)\Big),
\end{equation}
which has two local minima, one of them being a global minimum. The solution computed with the first algorithm is presented in Figure~\ref{fig:ex-nonexpli-p-nu}.

\begin{figure}
\centering
\begin{subfigure}{.45\textwidth}
  \centering
  \includegraphics[width=\linewidth]{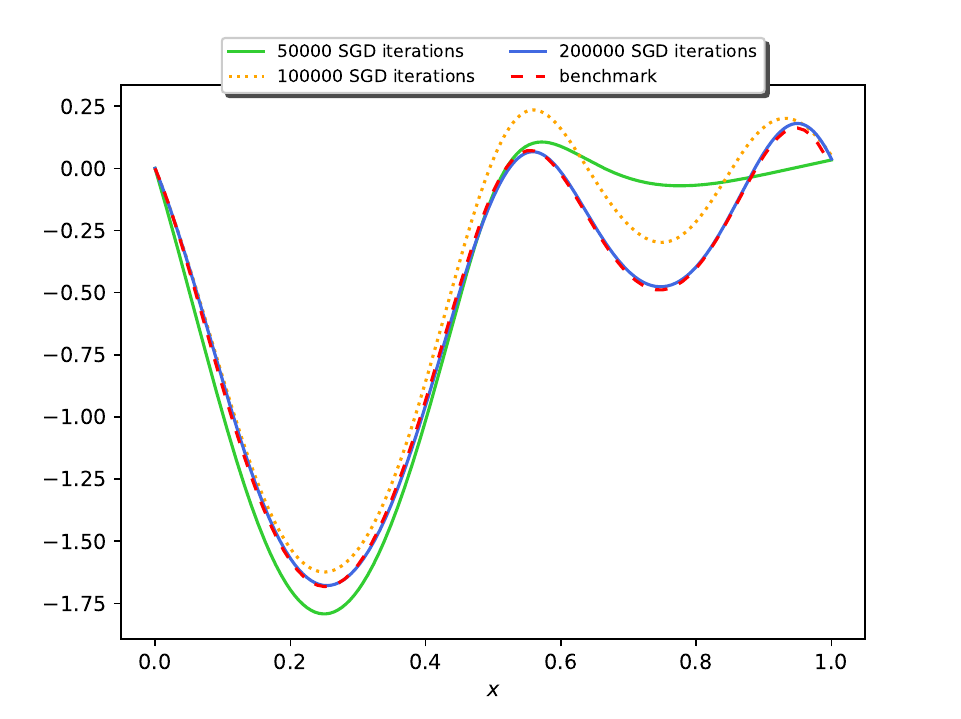}
  \caption*{Adjoint state $p$}
\end{subfigure}%
\begin{subfigure}{.45\textwidth}
  \centering
  \includegraphics[width=\linewidth]{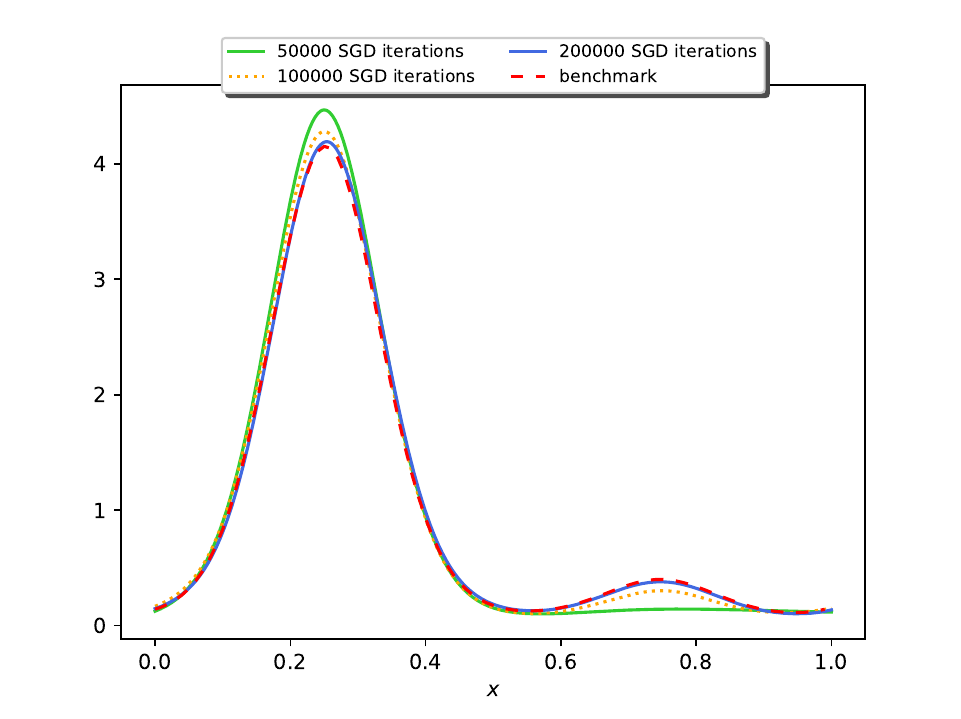}
  \caption*{Distribution $\nu$}
\end{subfigure}
\caption{Test case 1. Solution computed by Algorithm 1 and benchmark solution from deterministic method (dashed red line).}
\label{fig:ex-nonexpli-p-nu}
\end{figure}

\vskip 6pt
\noindent\textbf{Test case 2:} Next, we add a mean field interaction term in the cost
\begin{equation}
\label{eq:test2-b-f}
	b(x, \mu, \phi) = \phi, 
	\qquad 
	f(x, \mu, \phi) = \frac{1}{2} |\phi|^2  +  \tilde f(x) + |\mu(x)|^2,
\end{equation}
with $\tilde f$ given by~\eqref{eq:ex-nonexpli-tilde-f-coscossin}. Here $\mu(x)$ stands for the density of the measure $\mu$ at $x\in[0,1]$. The results are presented in Figure~\ref{fig:ex-nonexpliMF-p-nu}. Comparing with the first test case, one sees that due to the mean field term $|\mu(x)|^2$ in the cost function, the distribution is less concentrated around the global minimum and part of the mass is transferred to the second local minimum.

\begin{figure}
\centering
\begin{subfigure}{.45\textwidth}
  \centering
  \includegraphics[width=\linewidth]{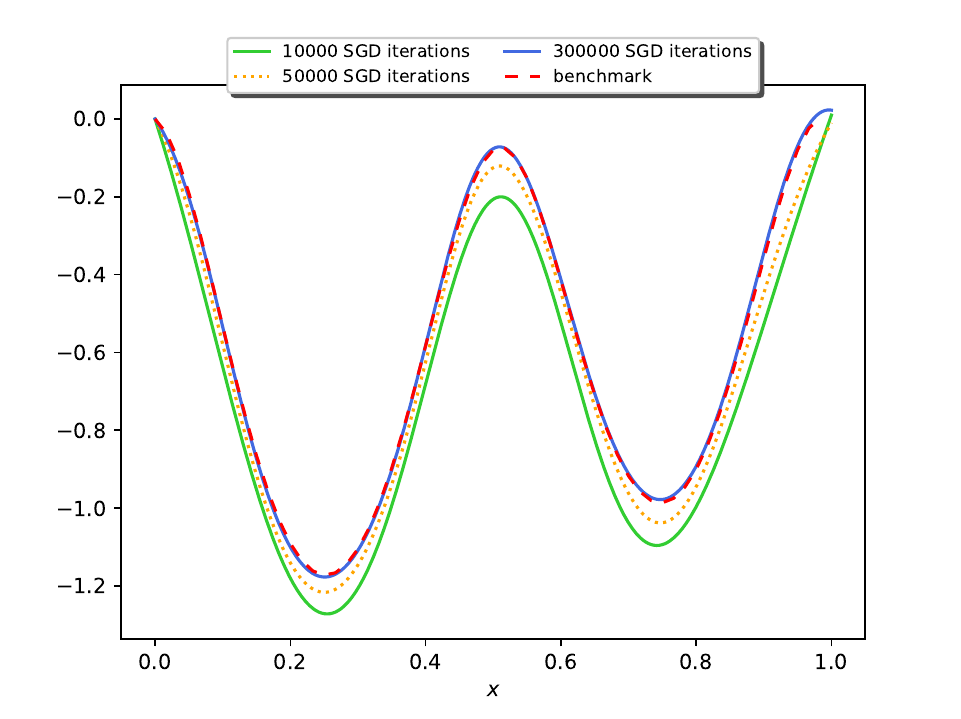}
  \caption*{Adjoint state $p$}
\end{subfigure}%
\begin{subfigure}{.45\textwidth}
  \centering
  \includegraphics[width=\linewidth]{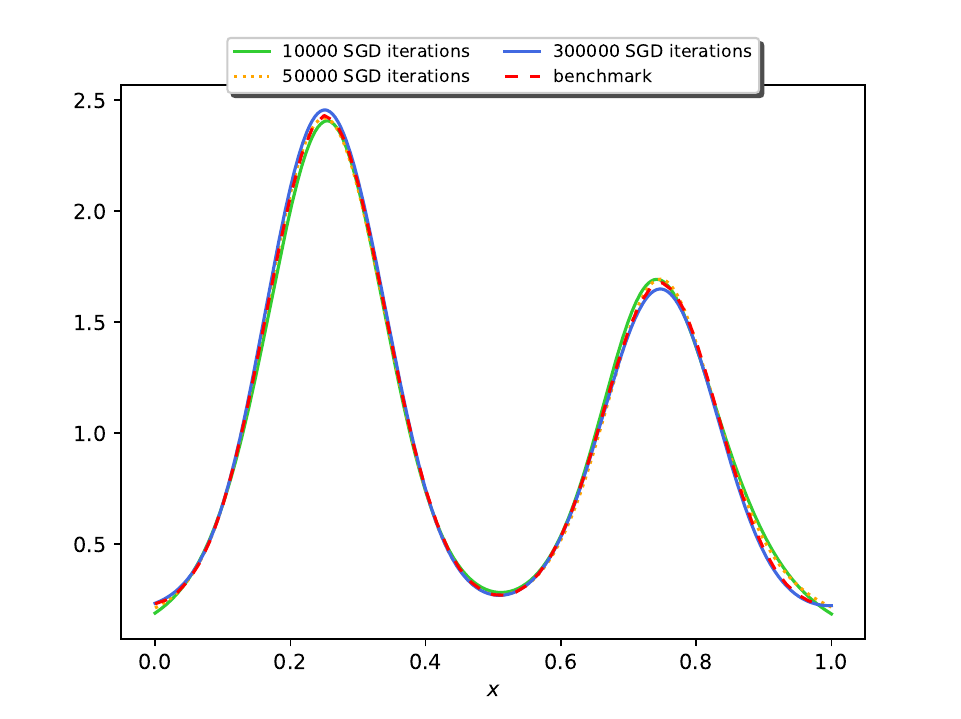}
  \caption*{Distribution $\nu$}
\end{subfigure}
\caption{Test case 2. Solution computed by Algorithm 1 and benchmark solution from deterministic method (dashed red line).}
\label{fig:ex-nonexpliMF-p-nu}
\end{figure}

\vskip 6pt
\noindent\textbf{Test case 3:} The DGM method can be used to solve the previous examples, but can also be used to solve other PDE systems, such as the one arising from mean field games. In the MFG setting, the PDE system for the optimality condition takes the following form~\cite{MR2269875}:
\begin{equation}
\label{fo:optimality-H-MFG}
\begin{cases}
&0=\frac12 \Delta \nu + \mathrm{div} \bigl( \partial_{y} H^\star(\cdot, \nu, \nabla p(\cdot)) \nu \bigr)\\
&0=\lambda + \frac12\Delta p(x) - H^\star(x, \nu, \nabla p(x)),
\end{cases}
\end{equation}
with normalization and boundary conditions, and where the minimized Hamiltonian $H^\star$ is defined in~\eqref{fo:H_star}. Notice that this PDE system \eqref{fo:optimality-H-MFG} is different from the PDE system \eqref{fo:optimality-H} for mean field control.

Taking $b$ and $f$ as in the previous test, namely~\eqref{eq:test2-b-f}, yields the solution displayed in Figure~\ref{fig:ex-nonexpliMF-p-nu-MFG} (to be compared with the corresponding curves for the MFC model of Test case 2, see Figure~\ref{fig:ex-nonexpliMF-p-nu}). Recall that with the DGM method, both $p$ and $\nu$ are approximated using two separate neural networks. In particular, we see on Figure~\ref{fig:ex-nonexpliMF-p-nu-MFG} that after $20000$ iterations of SGD, the neural network for $p$ has already roughly learnt the shape of the optimum, whereas the neural network for $\nu$ is still almost flat.

\begin{figure}
\centering
\begin{subfigure}{.45\textwidth}
  \centering
  \includegraphics[width=\linewidth]{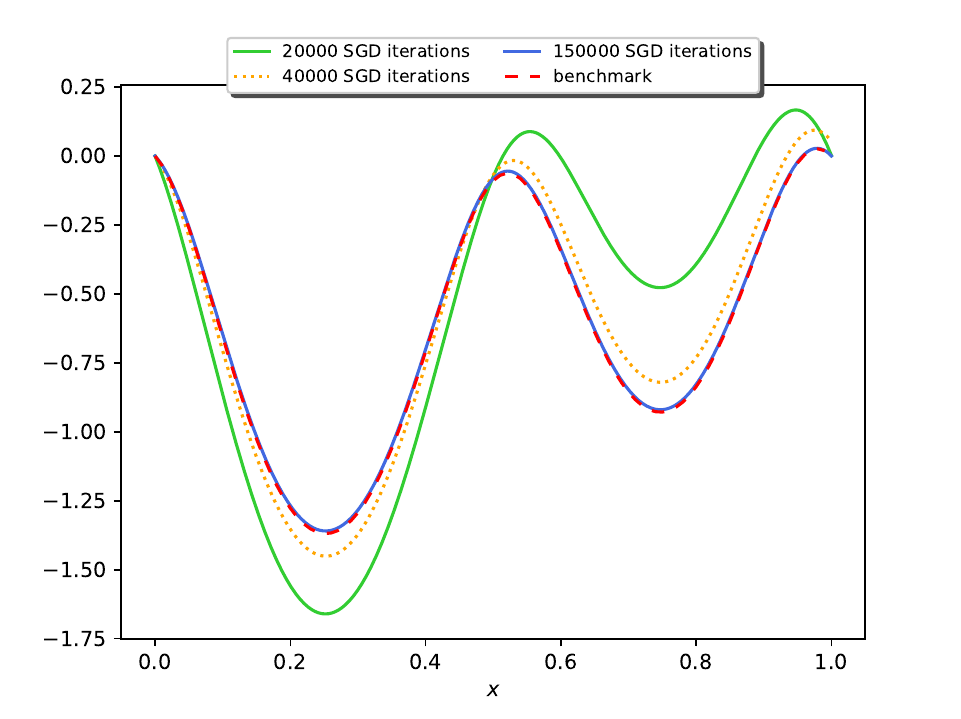}
  \caption*{Adjoint state $p$}
\end{subfigure}%
\begin{subfigure}{.45\textwidth}
  \centering
  \includegraphics[width=\linewidth]{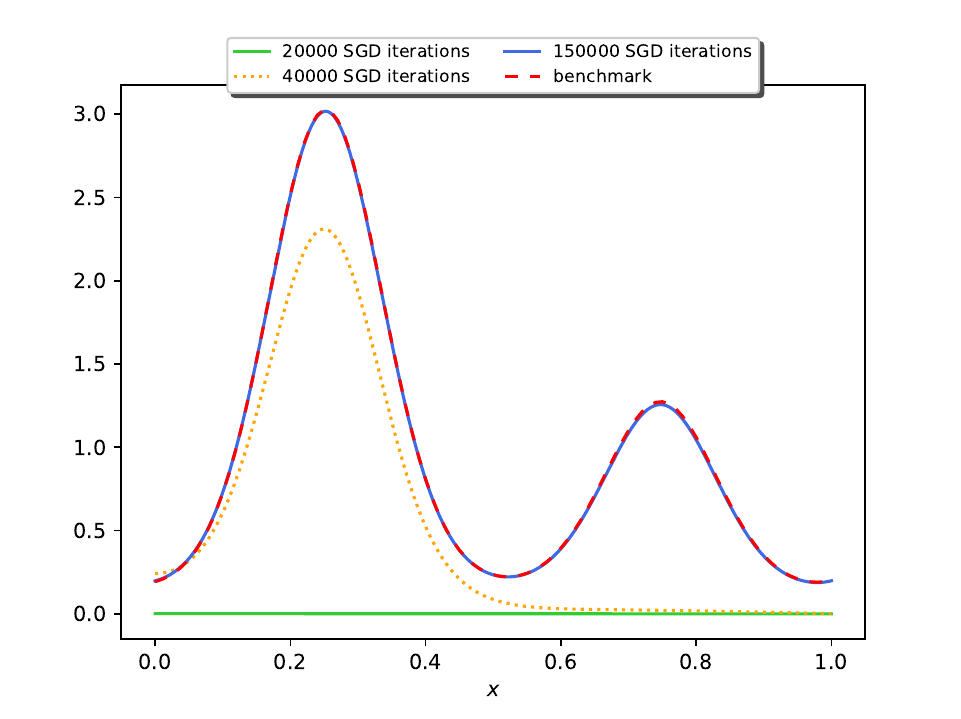}
  \caption*{Distribution $\nu$}
\end{subfigure}
\caption{Test case 3. Solution computed by Algorithm 2 (DGM) and benchmark solution from deterministic method (dashed red line).}
\label{fig:ex-nonexpliMF-p-nu-MFG}
\end{figure}

\subsection{Multivariate Examples with Explicit Solution}
\label{sub:ex-expli-sol-num}

To assess the quality of the proposed algorithms in higher dimension, we introduce simple toy models which can be solved explicitly. These models are very much in the spirit of examples considered in~\cite{MR3698446}. 
Let us take: 
\begin{equation}
\label{eq:testcase-bf}
	b(x,\alpha) = b(\alpha) = \alpha, \qquad f(x,\mu,\alpha) = \frac{1}{2} |\alpha|^2 + \tilde f(x) + \ln(\mu(x)),
\end{equation}
Then, the minimizer $\alpha^\star$ entering the definition of $H^\star(x,\mu,y)$  is given by:
$$
	\alpha^\star = \alpha^\star(y) = y.
$$
So $H^\star(x,\mu,y) = - \frac{1}{2}|y|^2 + \tilde f(x) + \ln(\mu(x))$ and $\int D_\mu H^\star(\xi,\nu, \nabla p(\xi))(x)\, \nu(d\xi) = 1$.
The PDE system~\eqref{fo:optimality-H} rewrites:
\begin{equation}
\label{fo:PDEsystem-simple}
\begin{cases}
&0= \frac12 \Delta \nu(x) - \mathrm{div} \bigl( \nabla p \nu \bigr)(x)
\\
&0= \lambda + \frac12 \Delta p(x) + \frac{1}{2}|\nabla p(x)|^2 - \tilde f(x) - \left( \ln(\nu(x)) + 1\right).
\end{cases}
\end{equation}
Assuming the existence of a smooth enough solution $(\nu,p,\lambda)$, the first equation allows us to express $\nu$ in terms of $p$ as follows: 
\begin{equation}
	\label{eq:nu-expp}
	\nu(x) = \frac{e^{2p(x)}}{\int e^{2p(x')}dx'}.
\end{equation}
The second equation in~\eqref{fo:PDEsystem-simple} rewrites
$$
	\nu(x) = \frac{e^{\frac{1}{2} (\Delta p(x) + |\nabla p(x)|^2) - \tilde f(x)}}{e^{1 - \lambda}}.
$$
In this case, the PDE system~\eqref{fo:PDEsystem-simple} is solved provided the above equation and the second equation in~\eqref{fo:PDEsystem-simple} are satisfied, which means that $(p,\lambda)$ solves:
$$
	2p(x) = \frac{1}{2} (\Delta p(x) + |\nabla p(x)|^2) - \tilde f(x), \qquad \lambda = 1 - \ln(\int e^{2p}).
$$
We consider two specific instances of $\tilde f$ for which we are able to obtain closed-form expressions for $p$.

\vskip 6pt
\noindent\textbf{Test case 4:} We consider~\eqref{eq:testcase-bf} again but now with $\tilde f$ given by
$$
	\tilde f(x) = 2 \pi^2 \left[ - \sum_{i=1}^d \sin(2 \pi x_i) + \sum_{i=1}^d |\cos(2 \pi x_i)|^2 \right] - 2\sum_{i=1}^d \sin(2 \pi x_i),
$$
then the solution is given by $p(x) = \sum_{i=1}^d \sin(2 \pi x_i)$ and $\lambda = 1 - \ln(\int e^{2 \sum_{i=1}^d \sin(2 \pi \xi_i)} d \xi)$.

We have solved numerically this problem in dimension $d=4$ using both methods.  
The convergence of the approximation $p_\theta$ learnt by our first algorithm towards the analytical solution $p$ is presented in Figure~\ref{fig:ex-expliMF-p-CV2expli}. This figure shows the relative $L^2$-error, which is defined as
$$
	\left(\frac{\|p - p_\theta\|^2_2}{\|p\|^2_2}\right)^{1/2} = \sqrt{\left( \int_{\TT^d} |p(x) - p_\theta(x)|^2 dx \right) / \left( \int_{\TT^d} |p(x)|^2 dx\right)}.
$$
In the implementation, this quantity is estimated with $10^5$ Monte Carlo samples for each integral. The figure corresponds to one run of SGD and illustrates the fact that the algorithm can be stuck in a local minimum for a certain number of iterations (between roughly iterations $10^5$ and $2 \times 10^5$ on this example) before finding its way out to a better solution. The distribution $\nu$ is deduced from $p$ using the formula~\eqref{eq:nu-expp}, which explains why the two convergence curves have the same shape.

\begin{figure}
\centering
\begin{subfigure}{.45\textwidth}
  \centering
  \includegraphics[width=\linewidth]{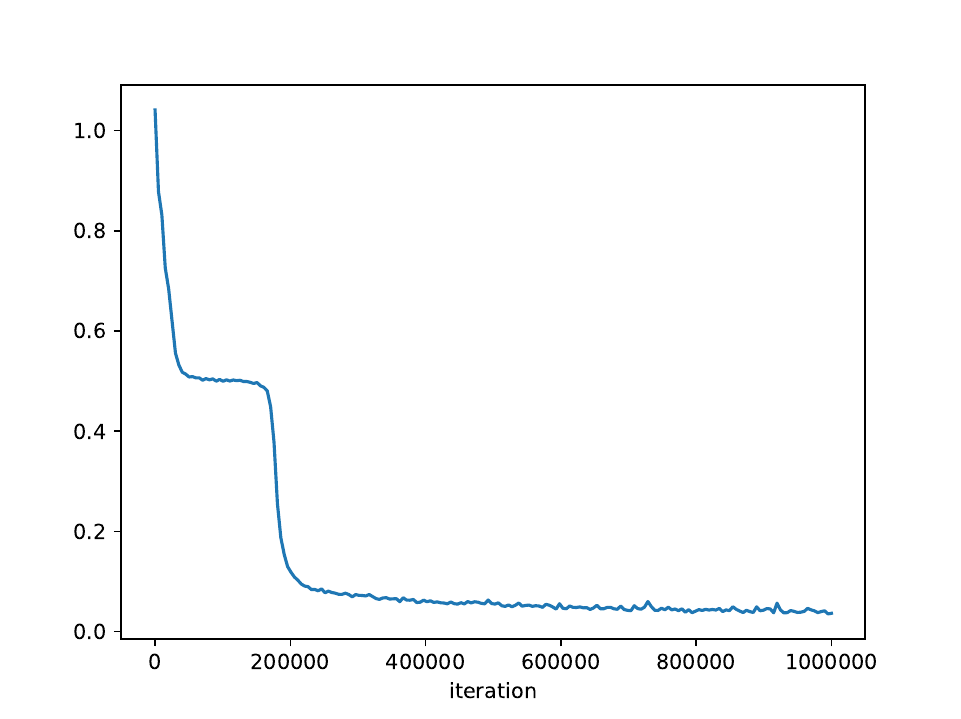}
  \caption*{Relative $L^2-$error on adjoint state $p$}
\end{subfigure}%
\begin{subfigure}{.45\textwidth}
  \centering
  \includegraphics[width=\linewidth]{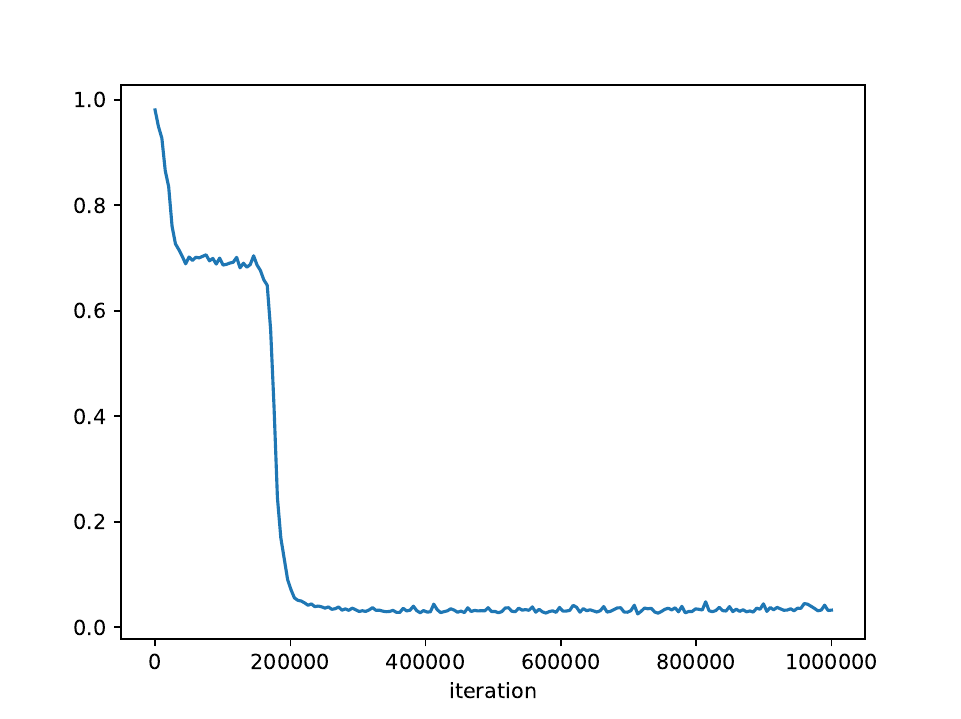}
  \caption*{Relative $L^2-$error on adjoint state $\nu$}
\end{subfigure}
\caption{Test case 4. Relative $L^2$-error by Algorithm 1.}
\label{fig:ex-expliMF-p-CV2expli}
\end{figure}

For the DGM method, numerical convergence is presented in Figure~\ref{fig:ex-expliMF-p-CV2expli-DGM}. As in the previous test case, the convergence rate of $p$ and $\nu$ is quite different because they are approximated by distinct neural networks. In particular, the error on $\nu$ decreases at a lower rate and suffers from a larger noise, which could be due to the form of the solution, see~\eqref{eq:nu-expp}. Moreover, since the logarithm of $\nu$ appears in the HJB equation, we were forced to choose a positive-valued function for the activation function of the output layer (instead of the identity). We compared results obtained with the exponential function and the softplus function: $\psi(x) = \ln(1+e^x)$. Since they gave similar results, we provide here the ones obtained with the exponential function.  

Here, for both methods, we used feed-forward neural networks with $3$ hidden layers, each layer having $20$ units.

\begin{figure}
\centering
\begin{subfigure}{.45\textwidth}
  \centering
  \includegraphics[width=\linewidth]{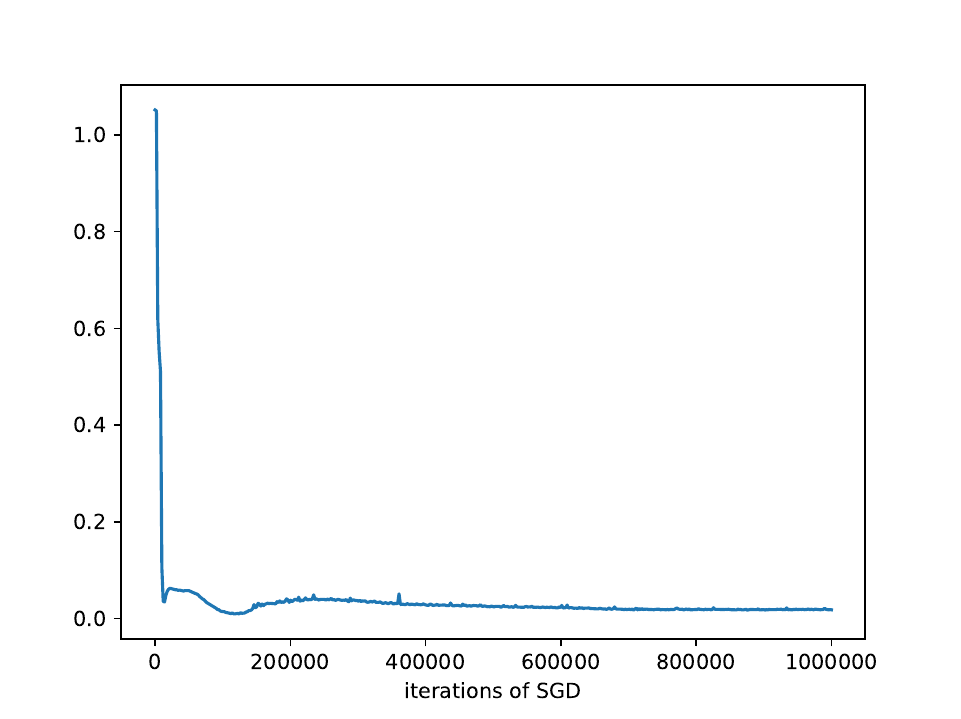}
  \caption*{Relative $L^2-$error on adjoint state $p$}
\end{subfigure}%
\begin{subfigure}{.45\textwidth}
  \centering
  \includegraphics[width=\linewidth]{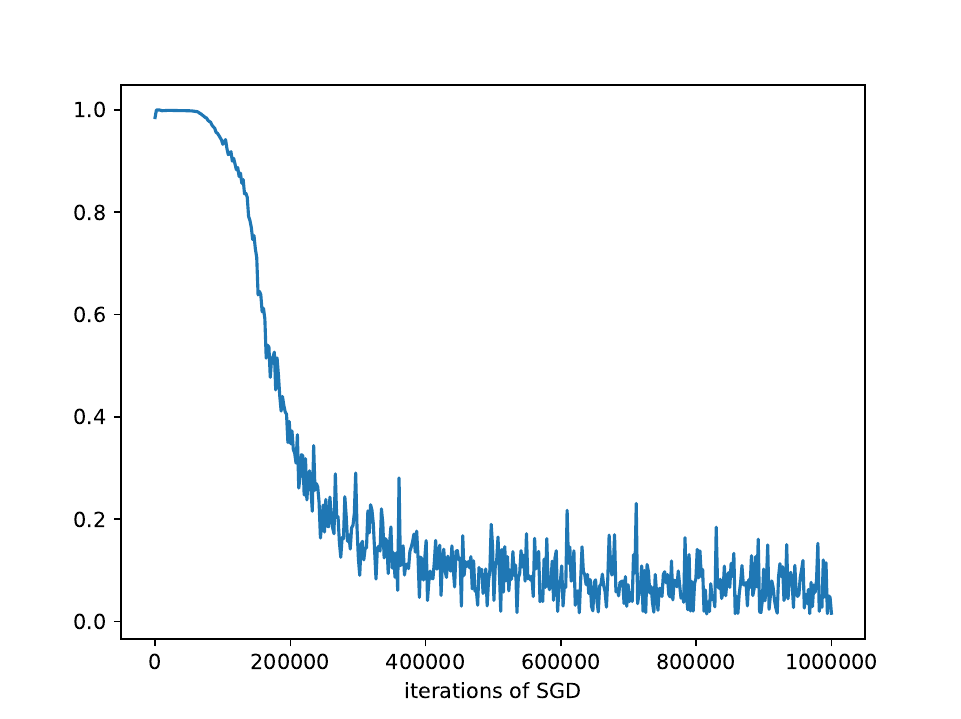}
  \caption*{Relative $L^2-$error on distribution $\nu$}
\end{subfigure}
\caption{Test case 4. Relative $L^2-$error by Algorithm 2 (DGM).}
\label{fig:ex-expliMF-p-CV2expli-DGM}
\end{figure}

\vskip 6pt
\noindent\textbf{Test case 5:} We consider a variant of the previous test case where  $\tilde f$ is chosen such that
$$
	p(x) = \displaystyle \prod_{i=1}^d \sin(2 \pi x_i)^2 - \int_{\TT^d} \prod_{i=1}^d \sin(2 \pi x_i)^2 dx.
$$
We use this example to study the influence of the number of hidden units and the number of samples in the population on the approximation $p_\theta$ found by the algorithm. For simplicity and to be consistent with the theoretical bounds provided in the prequel, we consider here neural networks with a single hidden layer. Figure~\ref{fig:ex-explitMF2-p-CMPunits} illustrates the dependence on the number of hidden units. As seen on Figure~\ref{fig:ex-explitMF2-p-CMPunits-fct-of-nU}, the error decreases quickly as the number of units grows until $30$. However, for a number of units larger than $30$, the error almost stagnates. This is due to the fact that the number of samples in the population drawn at each iteration of SGD is kept fixed to $10^5$. The dependence on this number of samples is illustrated in Figure~\ref{fig:ex-explitMF2-p-CMPsamples}, while keeping the number of units fixed to $60$. These numerical results were obtained by averaging over $10$ runs of SGD.

\begin{figure}
\renewcommand{\thesubfigure}{\Alph{subfigure}}
\centering
\begin{subfigure}{.45\textwidth}
  \centering
  \includegraphics[width=\linewidth]{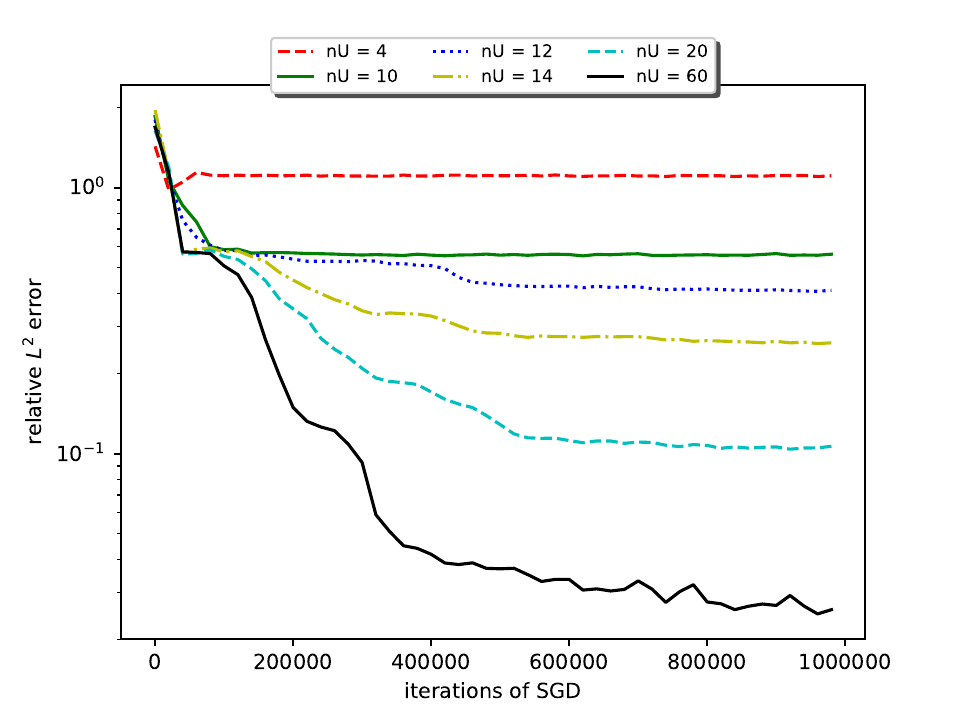}
  \caption{Relative error vs number of iterations of SGD}
  \label{fig:ex-explitMF2-p-CMPunits-fct-of-iter}
\end{subfigure}%
\begin{subfigure}{.45\textwidth}
  \centering
  \includegraphics[width=\linewidth]{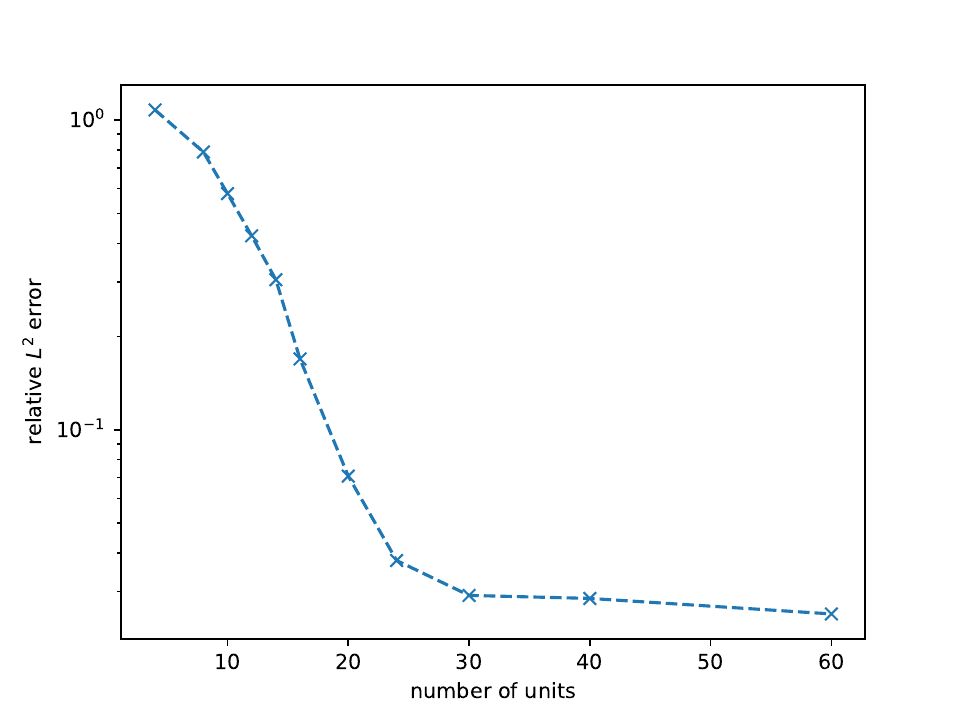}
  \caption{Relative error vs number of units}
  \label{fig:ex-explitMF2-p-CMPunits-fct-of-nU}
\end{subfigure}

\caption{Test case 5. Dependence of the relative $L^2$ error on the number of hidden units (Algorithm 1). The number of samples in the population is $10^5$. The number of units is indicated by ``nU'' for the figure on the left. On the right, relative $L^2$ error after $10^6$ iterations.}
\label{fig:ex-explitMF2-p-CMPunits}
\end{figure}

\begin{figure}
\centering
\begin{subfigure}{.45\textwidth}
  \centering
  \includegraphics[width=\linewidth]{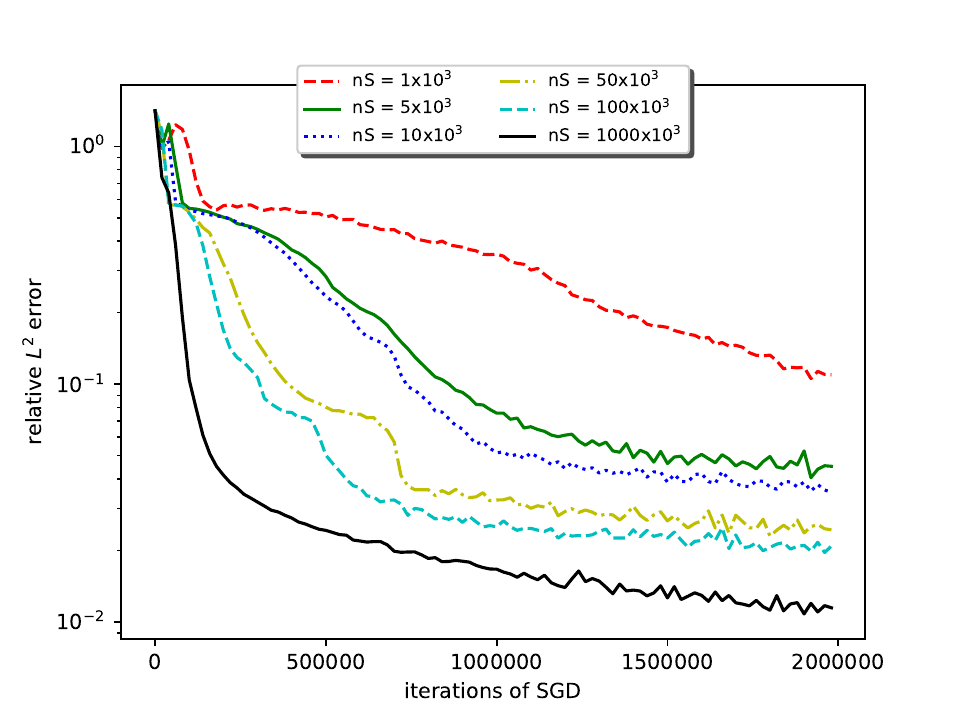}
  \caption*{Relative error vs number of iterations of SGD}
\end{subfigure}%
\begin{subfigure}{.45\textwidth}
  \centering
  \includegraphics[width=\linewidth]{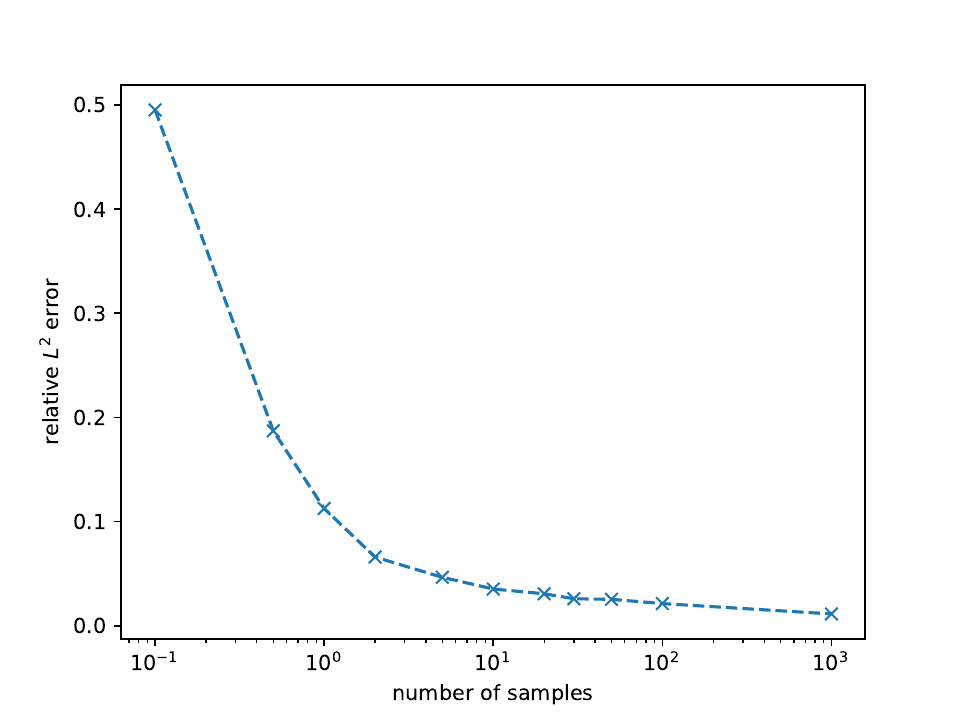}
  \caption*{Relative error vs number of samples}
\end{subfigure}
\caption{Test case 5. Dependence of the relative $L^2$ error on the number of samples in a population (Algorithm 1). The number of hidden units it $60$ for all curves. The number of samples is indicated by ``nS'' for the figure on the left. On the right, we plot the relative $L^2$ error after $10^6$ iterations.}
\label{fig:ex-explitMF2-p-CMPsamples}
\end{figure}

\vskip 6pt
\noindent\textbf{Test case 6:} We consider the following variant of the PDE system~\eqref{fo:PDEsystem-simple}, which corresponds to a MFG with the same cost function and dynamics as the MFC problem described above:
\begin{equation}
\label{fo:PDEsystem-simple-MFG}
\begin{cases}
&0= \frac12 \Delta \nu(x) - \mathrm{div} \bigl( \nabla p \nu \bigr)(x)
\\
&0= \lambda + \frac12 \Delta p(x) + \frac{1}{2}|\nabla p(x)|^2 - \tilde f(x) - \ln(\nu(x)).
\end{cases}
\end{equation}
This system does not have a variational interpretation so we approach it using Algorithm 2. Although neural networks with simple structures are universal approximators~\cite{cybenko1989approximation,hornik1991approximation}, in practice using deep neural networks with a well-suited architecture is crucial to the success of many deep learning applications. Here, we tested two different architectures. The first one is a simple feedforward fully connected architecture, as presented above. The second one is a recurrent neural network architecture, and more specifically the one -- referred to as DGM architecture in the sequel -- proposed by Sirignano and Spiliopoulos in~\cite{MR3874585} which is inspired by long short-term memory networks~\cite{hochreiter1997long} and highway networks~\cite{srivastava2015training}. It has been shown empirically to be particularly well adapted to learn how to approximate functions with complex structures as well as derivatives of functions. For the sake of brevity we do not reproduce here the architecture and we refer to~\cite{MR3874585} for more details about this type of neural networks and its success on some classes of PDEs. In high dimension, we did not obtain good results using the fully connected architecture so we present only numerical results obtained using the second type of architecture.

Figure~\ref{fig:ACD-d10} shows the $L^2$ errors on $p$ and $\nu$ and the residuals for both PDEs in dimension $d=10$. Here, we used a three layer neural network with $100$ units per layer for $p$ and three layer neural network with $200$ units per layer for $\nu$. As for the learning rate, instead of plain Adam optimizer as in the previous case, we used Adam with a piecewise-constant schedule of learning rate decay defined by $\alpha_m = 1\times 10^{-3} \mathbbm{1}_{1 \le m < 20000 } + 3\times 10^{-4} \mathbbm{1}_{20000 \le m < 40000 } + 1\times 10^{-4} \mathbbm{1}_{40000 \le m < 60000 } + 3\times 10^{-5} \mathbbm{1}_{60000 \le m}$. We used a minibatch of $4096$ samples at each iteration. The curves in Figure~\ref{fig:ACD-d10} have been obtained with a single run of the algorithm, by saving the values every $100$ iterations and taking a moving average over $10$ points to make the curves more readable. 

\begin{figure}
\centering
\begin{subfigure}{.45\textwidth}
  \centering
  \includegraphics[width=\linewidth]{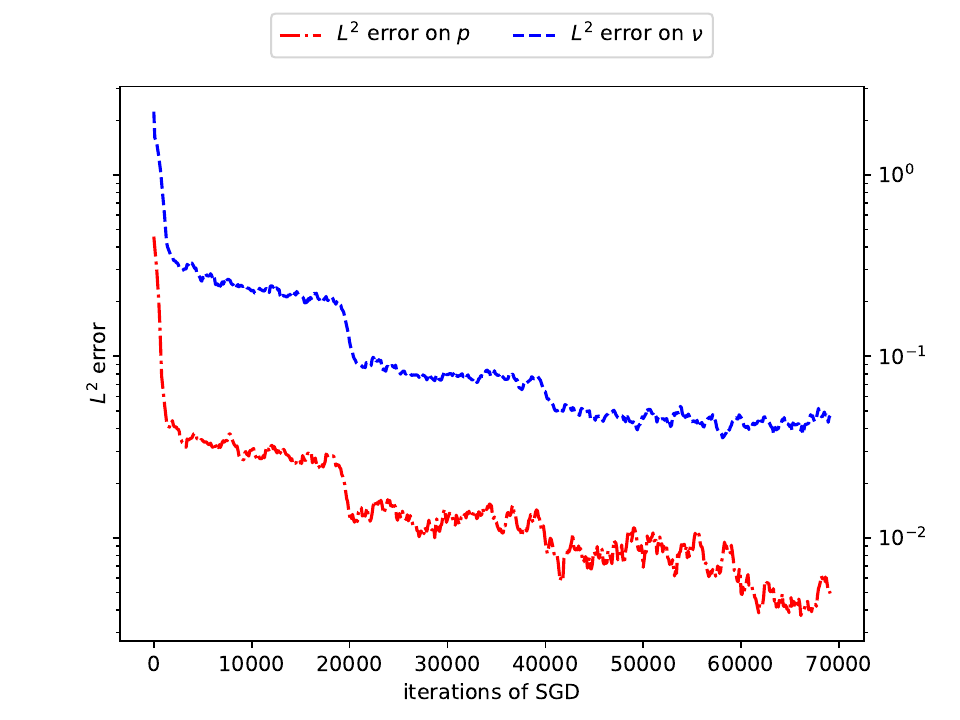}
  \caption*{$L^2$ errors on $p$ and $\nu$}
\end{subfigure}%
\begin{subfigure}{.45\textwidth}
  \centering
  \includegraphics[width=\linewidth]{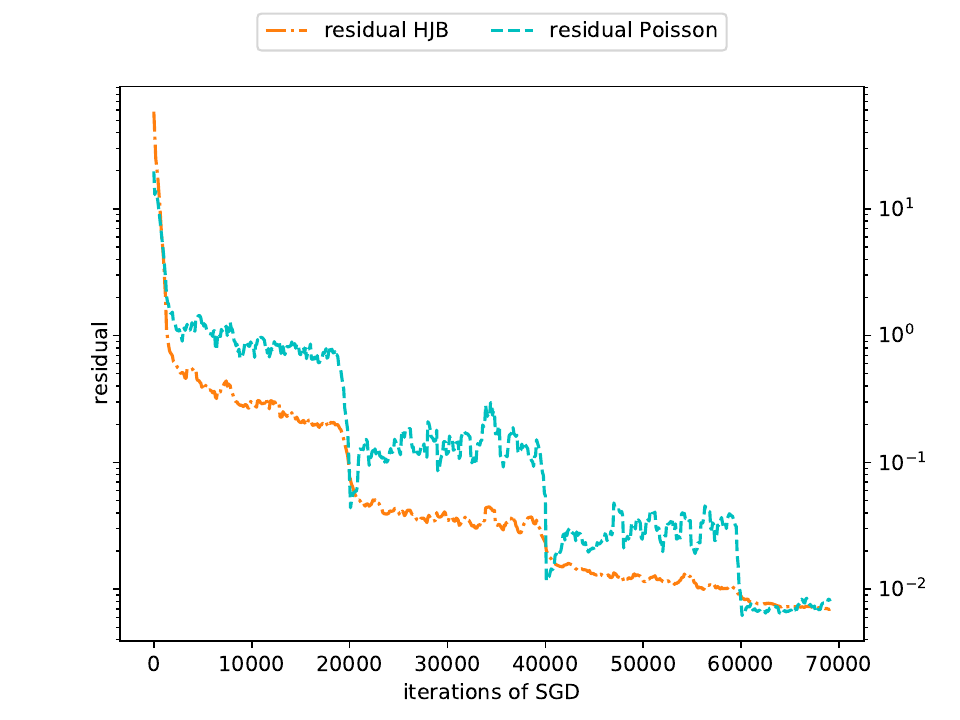}
  \caption*{Residuals for HJB and Poisson equations}
\end{subfigure}
\caption{Test case 6. $L^2$ error for each function (left) and residuals for each PDE versus number of iterations using Algorithm 2 (right).} 
\label{fig:ACD-d10}
\end{figure}

\vskip 6pt
\noindent\textbf{Test case 7:} We consider the following example with a quadratic dependence on the distribution, inspired by~\cite[Section 6]{MR2888257} for which there is no analytical solution to the best of our knowledge. Here we take 
$$
	H^\star(x,\mu,y) = - \frac{1}{2}|y|^2 + \tilde f(x) + |\mu(x)|^2
$$
with
\begin{equation}
\label{eq:tildef-sinpcos}
	\tilde f(x) = C \, \frac{1}{d} \, \sum_{i=1}^d \left[ \sin(2 \pi x_i) + \cos(2 \pi x_i) \right],
\end{equation}
whose maximum and minimum values are $\pm \sqrt{2} C$. 
We solved this example in dimensions up to $d=100$ using Algorithm 2 and the DGM architecture, with $c=1.5$. Since the PDEs no longer involve the logarithm of $\nu$, we use the identity function for the output layer's activation function. The residuals of the two PDEs are shown in Figure~\ref{fig:sinpcos-mu2}, which was obtained by averaging over three runs of the algorithm for each dimension, saving the residuals every 100 iterations, and using a moving average over 10 points to make the curves more readable.  We used minibatches of $128$ samples, and a DGM architecture with $3$ layers and $200$ units per layer. The learning rate was updated using Adam optimizer initialized with the value $10^{-4}$. Although the residuals are much smaller in dimension $d=2$ than in $d=50$, there is no deterioration between $d=50$ and $d=100$.   These results show that the method provides a good approximate solution without intensive tuning of the hyperparameters. Tuning these hyperparameters (e.g., the architecture, the learning rate and the minibatch size) based on the investigation carried out in the previous test cases should lead to even better results. But to be able to easily compare the results in various dimensions, we decided to keep them fixed. Only for the sake of comparison between dimensions, the average time per iteration is indicated in Table~\ref{table:test7-8-time}. These results have been obtained using a GPU with 2.4 GHz Xeon Broadwell
E5-2680 v4 processor.   Recall that here the architecture and the minibatch size are fixed, so the increase in computational time is due to the optimization of the neural network and these results show that for this aspect the computational cost scales almost linearly with the dimension.

\begin{table}[!ht]
\caption{
	Average time per iteration in seconds for test cases 7 and 8, with $d \in \{2,10,50,100\}$.
	\label{table:test7-8-time}}
\small
\centering
\ra{1.1}
\begin{tabular}{@{}cccc@{}}
\toprule
\textbf{Dimension $d$}&
Test case 6 & Test case 7 
\\
\midrule
$2$ & $5 \times 10^{-2}$ s. & $6 \times 10^{-2}$ s.
\\ 
$10$ & $21 \times 10^{-2}$ s. & $22 \times 10^{-2}$ s. 
\\ 
$50$ & $109 \times 10^{-2}$ s. & $108 \times 10^{-2}$ s.  
\\ 
$100$ & $225 \times 10^{-2}$ s. & $229 \times 10^{-2}$ s. 
\\ 
\bottomrule
\end{tabular}
\end{table}

\begin{figure}
\centering
\begin{subfigure}{.45\textwidth}
  \centering
  \includegraphics[width=\linewidth]{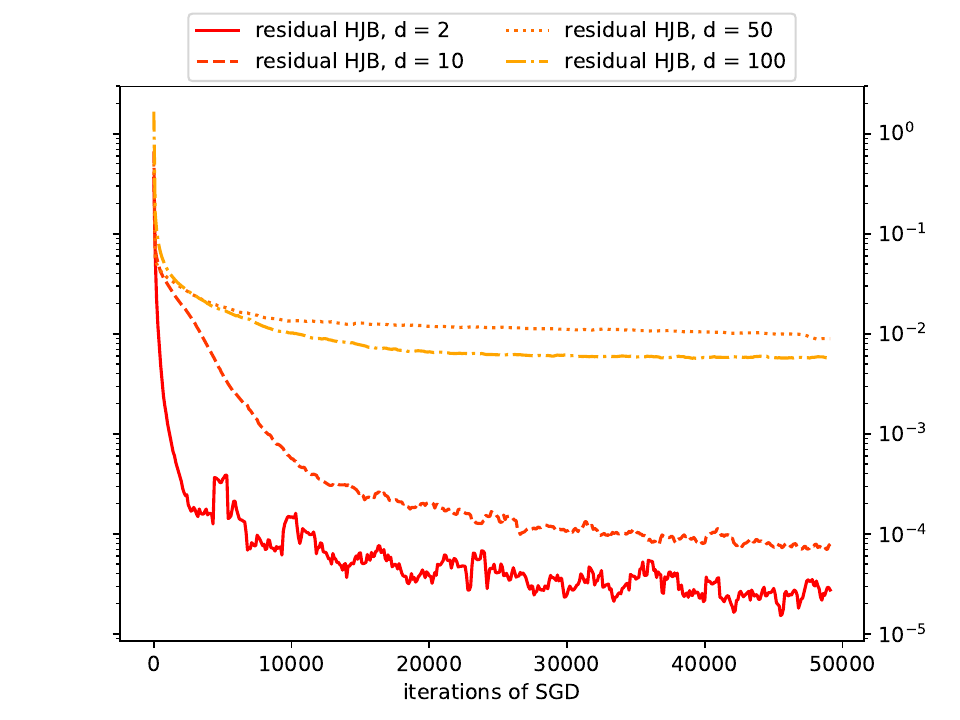}
  \caption*{Residuals for HJB equation}
\end{subfigure}%
\begin{subfigure}{.45\textwidth}
  \centering
  \includegraphics[width=\linewidth]{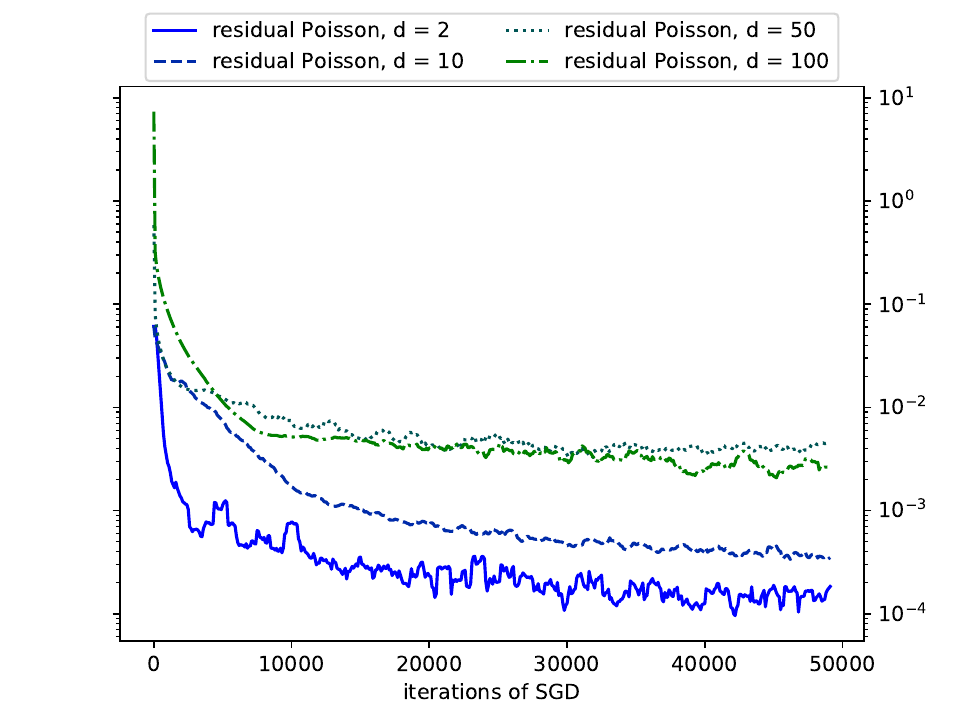}
  \caption*{Residuals for Poisson equation}
\end{subfigure}
\caption{Test case 7. Residuals versus number of iterations using Algorithm 2, for $d=2, 10, 50, 100$.} 
\label{fig:sinpcos-mu2}
\end{figure}

\vskip 6pt
\noindent\textbf{Test case 8:} We consider the following variant which incorporates a non-local term:
\begin{equation}
\label{eq:test7-H}
	H^\star(x,\mu,y) = - \frac{1}{2}|y|^2 + \tilde f(x) + |\mu(x)|^2 + C \, \frac{1}{d} \, \sum_{i=1}^d \left[ \int_{\TT^d} \sin(x') d \mu(x') \right]_i
\end{equation}
with $\tilde f$ given by~\eqref{eq:tildef-sinpcos}.  
We solved this example in dimensions up to $d=100$ using the DGM method, with $c=1.5$. The residuals of the two PDEs are shown in Figure~\ref{fig:sinpcos-mu2-integ}, which was obtained using the same architecture and the same procedure as for Test case 6.  The average time per iteration is indicated in Table~\ref{table:test7-8-time} and is roughly the same as in Test case 6 because we used $128$ samples too to approximate the integral appearing in~\eqref{eq:test7-H}.

\begin{figure}
\centering
\begin{subfigure}{.45\textwidth}
  \centering
  \includegraphics[width=\linewidth]{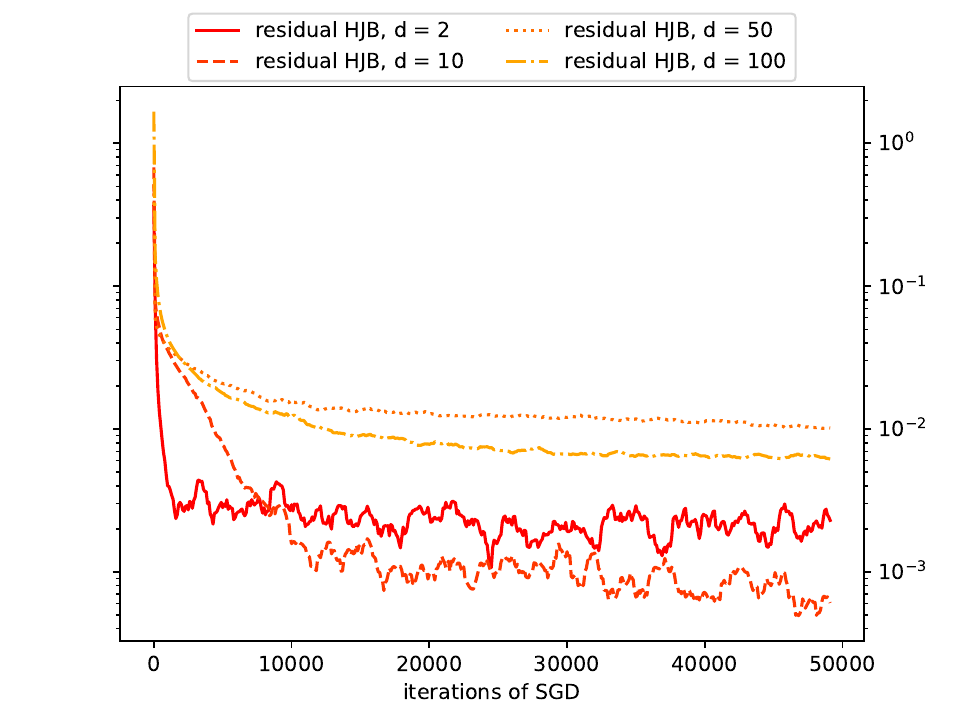}
  \caption*{Residuals for HJB equation}
\end{subfigure}%
\begin{subfigure}{.45\textwidth}
  \centering
  \includegraphics[width=\linewidth]{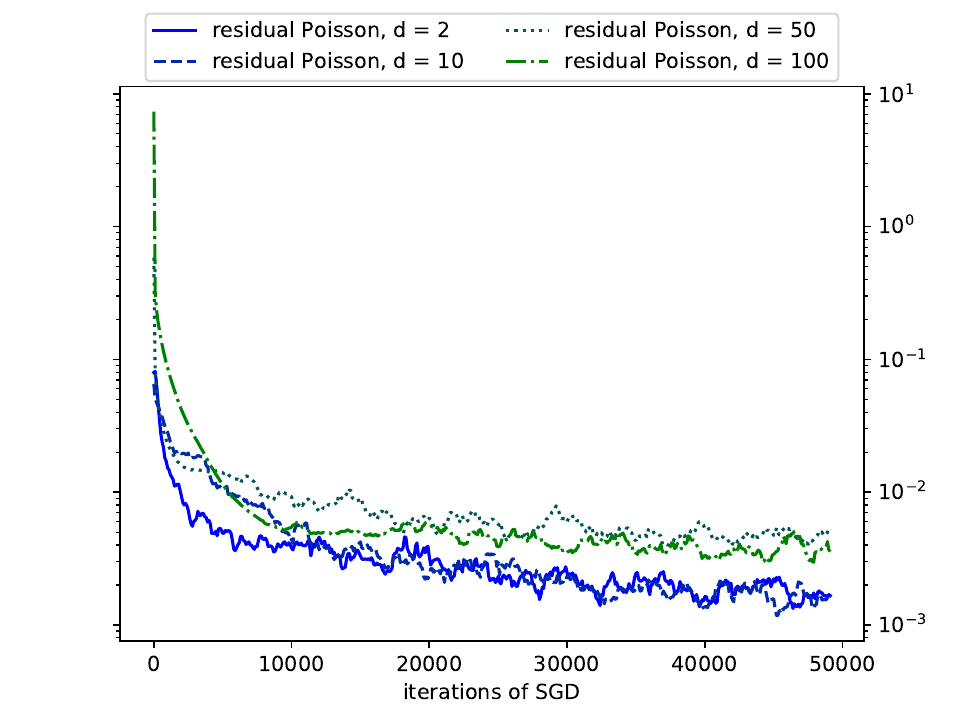}
  \caption*{Residuals for Poisson equation}
\end{subfigure}
\caption{Test case 8. Residuals versus number of iterations (Algorithm 2), for $d=2, 10, 50, 100$.} 
\label{fig:sinpcos-mu2-integ}
\end{figure}

\section{Conclusion}
In this paper, we introduced two numerical algorithms for the solution of the optimal control of  ergodic McKean-Vlasov dynamics
also known as ergodic mean field control problems. We approximated the theoretical solutions by functions given by neural networks, the latter being determined by their architectures and suitable sets of parameters. This allowed the use of modern machine learning tools, and efficient implementations of stochastic gradient descent.

The first algorithm is based on the specific structure of  the ergodic optimal control problem.
We provided a mathematical proof of the convergence of the algorithm, and we analyzed rigorously the numerical scheme by controlling both the approximation and the estimation error. The second method is an adaptation of the deep Galerkin method to the system of partial differential equations issued from the optimality conditions. We showed that it can also be applied to the PDE system arising in mean field games, even when the latter do not have a variational structure. 

Our numerical results support the idea that these methods can be used in large dimension. From here, several directions can be contemplated. First, using the same algorithms and architectures, it should be possible to obtain even better results with more time and better hardware (i.e. computational time and power). We view this work as a first step, and we tried to frame it so that it could be accessible to, and reproducible by a large community of researchers. 

Finally, we believe that it should be possible to design efficient methods by using known properties of the specific structure of the mean-field PDE system without assuming any knowledge of the form of the solution. The adjoint structure between the HJB and the Poisson equation is a case in point.

\bibliographystyle{abbrv}%
 \small

\end{document}